\documentclass{amsart}

\linespread{1.08}



\usepackage{amssymb}

\usepackage[urlcolor=black, colorlinks=true, citecolor=black, linkcolor=black]{hyperref}
\usepackage{tikz}
\tikzset{node distance=3cm, auto}
\usetikzlibrary{calc}
\usetikzlibrary{patterns}
\usepackage{tabulary}
\usepackage{array}
\usepackage{enumerate,enumitem}
\usepackage{bbm}

\makeatletter
\@namedef{subjclassname@2010}{%
  \textup{2010} Mathematics Subject Classification}
\makeatother



\newtheorem*{theorem*}{Theorem}
\newtheorem{mythm}{Theorem} \numberwithin{mythm}{section} 

\newtheorem{myobs}[mythm]{Observation}
\newtheorem{mycor}[mythm]{Corollary}
\newtheorem{mylem}[mythm]{Lemma} 
\newtheorem{myquest}{Question} 

\newtheorem*{myclaim*}{Claim}




\theoremstyle{definition}



\numberwithin{equation}{section}

%
%



\newcommand{ \N } { \mathbb{N} }

\newcommand{\w}{\omega}
\newcommand{\cont}{\mathfrak{c}}
\newcommand{\continuum}{\mathfrak{c}}

\newcommand{\script}{\mathcal}
\newcommand{\parentheses}[1]{{\left( {#1} \right)}}
\newcommand{\sequence}[1]{{\langle {#1} \rangle}}
\newcommand{\p}{\parentheses}

\newcommand{\Set}[1]{{\left\lbrace {#1} \right\rbrace}}
\newcommand{\singleton}{\Set}
\newcommand{\cardinality}[1]{{\left\lvert {#1} \right\rvert}}

\newcommand{\pair}[1]{\langle {#1} \rangle}
\def\set#1:#2{\Set{{#1} \colon {#2}}}
\def\Sequence#1:#2{\left \langle {#1} \colon {#2}  \right \rangle}

\renewcommand{\subset}{\subseteq}


\begin{document}


\baselineskip=17pt



\title[Minimal obstructions for normal spanning trees]{Minimal obstructions for normal spanning trees}

\author[N. Bowler]{Nathan Bowler}
\address{Department of Mathematics\\ University of Hamburg\\ Bundesstra{\ss}e 55, 20146 Hamburg, Germany}
\email{nathan.bowler@uni-hamburg.de}

\author[S. Geschke]{Stefan Geschke}
\address{Department of Mathematics\\ University of Hamburg\\ Bundesstra{\ss}e 55, 20146 Hamburg, Germany}
\email{stefan.geschke@uni-hamburg.de}

\author[M. Pitz]{Max Pitz}
\address{Department of Mathematics\\ University of Hamburg\\ Bundesstra{\ss}e 55, 20146 Hamburg, Germany}
\email{max.pitz@uni-hamburg.de}
\thanks{The third author is the corresponding author.}

\date{}

\begin{abstract}
Diestel and Leader have characterised connected graphs that admit a normal spanning tree via two classes of forbidden minors. One class are Halin's $(\aleph_0,\aleph_1)$-graphs: bipartite graphs with bipartition $(A,B)$ such that $\cardinality{A} = \aleph_0$, $\cardinality{B} = \aleph_1$ and every vertex of $B$ has infinite degree. 

Our main result is that under Martin's Axiom and the failure of the Continuum Hypothesis, the class of forbidden $(\aleph_0,\aleph_1)$-graphs in Diestel and Leader's result can be replaced by one single instance of such a graph.

Under CH, however, the class of $(\aleph_0,\aleph_1)$-graphs contains minor-incom-parable elements, namely graphs of binary type, and $\script{U}$-indivisible graphs. Assuming CH, Diestel and Leader asked  whether every $(\aleph_0,\aleph_1)$-graph has an $(\aleph_0,\aleph_1)$-minor that is either indivisible or of binary type, and whether any two $\script{U}$-indivisible graphs are necessarily minors of each other. For both questions, we construct examples showing that the answer is in the negative.
\end{abstract}

\subjclass[2010]{Primary 05C63, 05C75; Secondary 03E05, 03E50}
\keywords{Normal spanning tree, almost disjoint family, forbidden minor.}

\maketitle

\section{The results}

A \emph{(graph theoretic) tree} is a connected, acyclic graph. A subgraph $H$ of a graph $G$ is called \emph{spanning} if $H$ has the same vertex set as $G$. Thus, a \emph{spanning tree} $T$ of a connected graph $G$ is a connected, acyclic subgraph containing every vertex of $G$. A tree is \emph{rooted} if it has one designated vertex, called the \emph{root}. Fixing a root of a graph-theoretic tree $T$ induces a natural tree order on its vertex set $V(T)$ with the root as unique minimal element.

A rooted spanning tree $T$ of a graph $G$ is called \emph{normal} if the end-vertices of any edge of $G$ are comparable in the natural tree order of $T$, see e.g\ \cite[\S1.5]{Diestel}. Intuitively, all the edges of $G$ run `parallel' to branches of $T$, but never `across'. Every countable connected graph has a normal spanning tree, but uncountable graphs might not, as demonstrated by complete graphs on uncountably many vertices \cite[8.2.3]{Diestel}.

Halin \cite[7.2]{Halin} observed that as a consequence of a theorem of Jung, the property of having a normal spanning tree is minor-closed, i.e.\ preserved under taking (connected) minors. Here, a graph $H$ is a \emph{minor} of another graph $G$, written $H \preceq G$, if to every vertex $x \in H$ we can assign a (possibly infinite) connected set $V_x \subset V(G)$, called the \emph{branch set} of $x$, so that these sets $V_x$ are disjoint for different $x$ and $G$ contains a $V_x-V_y$ edge whenever $xy$ is an edge of $H$. 

Halin's observation opens up the possibility of a forbidden minor characterisation for the property of admitting normal spanning trees. In the universe of finite graphs, the famous Seymour-Robertson Theorem asserts that any minor-closed property of finite graphs can be characterised by \emph{finitely} many forbidden minors, see e.g.\ \cite[\S12.7]{Diestel}. Whilst for infinite graphs, we generally need an infinite list of forbidden minors, Diestel and Leader have shown that for the property of having a normal spanning tree, the forbidden minors come in two structural types. 

Following Halin, a bipartite graph with bipartition $(A,B)$ is called an $(\aleph_0,\aleph_1)$\emph{-graph} if $\cardinality{A}=\aleph_0$, $\cardinality{B}=\aleph_1$, and every vertex in $B$ has infinite degree. 


\begin{theorem*}[Diestel and Leader, \cite{NST}]
A connected graph admits a normal spanning tree if and only if it does not contain an $(\aleph_0,\aleph_1)$-graph or an AT-graph (a certain kind of graph whose vertex set is an order-theoretic Aronszajn tree) as a minor.
\end{theorem*}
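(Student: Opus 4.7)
The plan is to split the argument into necessity and sufficiency. For necessity, I would begin by recalling, as noted in the text via Halin and Jung, that admitting a normal spanning tree is preserved under connected minors; hence it suffices to check that no $(\aleph_0,\aleph_1)$-graph and no AT-graph itself admits a normal spanning tree. For an $(\aleph_0,\aleph_1)$-graph with parts $(A,B)$ I would show that in any putative normal spanning tree $T$, every $b \in B$ has its infinitely many $A$-neighbours along the chain from the root down to $b$; then a counting argument exploiting the countability of chains in normal trees, together with $|A|=\aleph_0$ and $|B|=\aleph_1$, delivers a contradiction (some $A$-chain is forced to be reused by too many $b$'s, producing an uncountable chain in $T$). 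For an AT-graph, a structurally parallel but order-theoretically more delicate argument leverages the defining Aronszajn property of the vertex set --- uncountable, no uncountable chain, countable levels --- to show that the AT-adjacency structure is incompatible with being sorted into any single tree order of the kind required by a normal spanning tree.

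For sufficiency, my plan is to invoke Jung's theorem in the form that $G$ admits a normal spanning tree iff $V(G)$ is a countable union of \emph{dispersed} sets, where $D \subseteq V(G)$ is dispersed if every ray of $G$ meets $D$ only finitely often. Given a connected $G$ without a normal spanning tree, $V(G)$ fails to admit any such decomposition, and I would proceed by a dichotomy. Either some countable $A \subseteq V(G)$ witnesses the obstruction, in the sense that $\aleph_1$ vertices outside $A$ each attach to $A$ through infinitely many connected pieces of $G-A$; contracting those pieces to single vertices then exhibits an $(\aleph_0,\aleph_1)$-minor. Otherwise no such countable localisation is available, and I would perform a transfinite construction of partial normal subtrees indexed by countable ordinals; the fact that this construction must stall before exhausting $V(G)$ leaves an order-theoretic record from which I would extract an AT-graph minor, using the non-availability of a countable localisation to argue that the extracted order has no uncountable chain.

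The main obstacle is the dichotomy at the heart of sufficiency. The $(\aleph_0,\aleph_1)$-alternative is relatively direct once the right countable localisation is identified; the subtle half is the AT-graph case. Here one must show that, in the absence of a countable localisation, a transfinite construction produces an uncountable order with countable levels and no uncountable chain --- i.e., an Aronszajn tree --- and simultaneously that this order supports the adjacencies required to appear as an AT-graph minor of $G$. Coordinating the order-theoretic and graph-theoretic aspects so that a genuine AT-graph minor emerges from the bare failure of normal-spanning-tree decompositions, rather than some weaker order-theoretic skeleton, is where the bulk of the technical work lies.
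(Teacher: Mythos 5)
You should first be aware that the paper does not prove this statement at all: it is quoted as background from Diestel and Leader's article \cite{NST}, where it is the main theorem, so there is no in-paper proof to compare your attempt against. Judged on its own terms, your outline has the right overall architecture --- necessity via minor-closedness plus showing that neither forbidden class admits a normal spanning tree, sufficiency via Jung's dispersed-sets criterion and a dichotomy yielding either an $(\aleph_0,\aleph_1)$-minor or an Aronszajn-structured minor --- and this does match the shape of the original argument. But it is a roadmap with genuine gaps, not a proof.

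Concretely: first, your necessity argument for $(\aleph_0,\aleph_1)$-graphs is wrong as stated. A normal spanning tree is a graph-theoretic tree, so every vertex lies at finite distance from the root; the chain from the root down to $b$ is therefore finite and cannot carry the infinitely many $A$-neighbours of $b$. The correct argument runs the other way: since the down-set $\lceil b\rceil$ is finite, all but finitely many neighbours of $b$ lie \emph{above} $b$; up-sets of incomparable vertices in a tree are disjoint, so any antichain in $B$ injects pairwise disjoint infinite subsets of $A$ into the countable set $A$ and is hence countable; as the tree order has height at most $\omega$, $B$ is a countable union of countable antichains, contradicting $\cardinality{B}=\aleph_1$. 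Second, the AT-graph half of necessity and the entire sufficiency dichotomy are declared rather than carried out: extracting an Aronszajn tree order from the failure of Jung's criterion, and then realising an AT-graph \emph{minor} supported on that order (rather than a bare order-theoretic skeleton), is precisely the substance of Diestel and Leader's paper. Your text correctly identifies this as where the bulk of the work lies, but supplies none of it, so the proposal cannot be accepted as a proof of the statement.
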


In the same paper, they ask how one might further describe the minor-minimal graphs within the class of $(\aleph_0,\aleph_1)$-graphs. 

One family of possibly minimal $(\aleph_0,\aleph_1)$-graphs suggested by Diestel and Leader are the \emph{binary trees with tops}, also called $(\aleph_0,\aleph_1)$-graphs of \emph{binary type}: Let $A$ be a binary tree of countable height, and let $B$ index $\aleph_1$-many branches of $A$. We form an $(\aleph_0,\aleph_1)$-graph with bipartition $(A,B)$ by connecting every vertex $b \in B$ to infinitely many points on its branch. Details on these graphs can be found in Section~\ref{sec2}. We can now state our main result as follows.

\begin{mythm}
\label{intromain}
Let $T$ be an arbitrary binary tree with tops. Under Martin's Axiom and the failure of the Continuum Hypothesis, the graph $T$ embeds into any other $(\aleph_0,\aleph_1)$-graph as a subgraph.
\end{mythm}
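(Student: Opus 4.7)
My plan is to construct the embedding by a forcing-style argument, using Martin's Axiom with $\aleph_1$ dense sets. Let $G$ be an arbitrary $(\aleph_0,\aleph_1)$-graph with bipartition $(A,B)$, and let $T$ be a binary tree with tops, with bipartition $(A_T,B_T)$ where $A_T = 2^{<\omega}$ and each $b \in B_T$ has $N_T(b)$ an infinite subset of some branch of $A_T$. I would introduce the poset $\mathbb{P}$ whose conditions are pairs of finite partial injections $\sigma : A_T \to A$ and $\tau : B_T \to B$ satisfying edge-compatibility: whenever $s \in \mathrm{dom}(\sigma)$, $b \in \mathrm{dom}(\tau)$ and $sb \in E(T)$, we require $\sigma(s)\tau(b) \in E(G)$; conditions are ordered by reverse inclusion.

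A filter meeting the $\aleph_1$ dense sets $D_s = \{p : s \in \mathrm{dom}(\sigma_p)\}$ (for $s \in A_T$) and $E_b = \{p : b \in \mathrm{dom}(\tau_p)\}$ (for $b \in B_T$) produces injections $\sigma$ and $\tau$ realising $T$ as a subgraph of $G$. Under MA + $\neg$CH such a filter exists provided $\mathbb{P}$ is ccc and each $D_s, E_b$ is dense. I expect ccc to follow from a $\Delta$-system argument on the finite domains of $\sigma_p$ and $\tau_p$: after refining an uncountable collection of conditions to share a common root, conditions that agree on the root should be compatible, modulo a bit of care using that in $T$ distinct tops have only finite common neighbourhoods.

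The main obstacle is the density of the sets $E_b$: given $p$ and $b \notin \mathrm{dom}(\tau_p)$, one must produce $b^* \in B \setminus \mathrm{range}(\tau_p)$ with $\sigma_p(N_T(b) \cap \mathrm{dom}(\sigma_p)) \subseteq N_G(b^*)$. In a wholly arbitrary $(\aleph_0,\aleph_1)$-graph no such $b^*$ need exist, so some preprocessing of $G$ is indispensable. I would expect a preliminary step, itself the main use of MA + $\neg$CH, which produces from $G$ an uncountable subfamily $B^*\subseteq B$ together with infinite refined neighbourhoods $N^*(b^*)\subseteq N_G(b^*)$ admitting an almost-disjoint or pseudointersection structure rich enough that any finite set of constraints consistent with the construction so far is covered by uncountably many $N^*(b^*)$. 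The conditions in $\mathbb{P}$ would then be required to use only $B^*$ and to carry an explicit promise that the partial embedding can always be extended. Density of $D_s$ should then be cheaper: given $p$ and $s \notin \mathrm{dom}(\sigma_p)$, one picks $a \in \bigcap_{b \in F} N_G(\tau_p(b)) \setminus \mathrm{range}(\sigma_p)$ where $F = \{b \in \mathrm{dom}(\tau_p) : s \in N_T(b)\}$ is finite, using that in the refined family this intersection is guaranteed to be nonempty (indeed infinite).

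In summary, the proof has two layers: an MA + $\neg$CH-powered combinatorial refinement of $G$, followed by a ccc forcing construction of the embedding from that refined structure. The hardest step is the refinement, and this is exactly where the failure of CH matters, in accordance with the fact that the theorem is known to fail under CH as indicated in the abstract.
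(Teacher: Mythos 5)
Your overall architecture -- a ccc poset of finite partial embeddings, $\aleph_1$ dense sets, and Martin's Axiom -- is in the right spirit, and you correctly isolate the crux: density of the sets $E_b$ fails for an arbitrary $(\aleph_0,\aleph_1)$-graph, so $G$ must first be replaced by a structurally richer subgraph. But that preprocessing step is where essentially all the content of the theorem lives, and your proposal leaves it unresolved. Worse, the one concrete property you attribute to the refined family is incompatible with what you later use. If the refined neighbourhoods $N^*(b^*)$ form an almost disjoint family, then for $\lvert F\rvert\ge 2$ the set $\bigcap_{b\in F}N_G(\tau_p(b))$ is \emph{finite} by definition of almost disjointness, not infinite as you claim when arguing density of $D_s$; it can be entirely exhausted by $\operatorname{range}(\sigma_p)$, and then no extension places $s$. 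This is not just a defect of the proof: if a condition has already mapped two tops $b,b'$ with $s\in N_T(b)\cap N_T(b')$ to tops of the refined graph whose branches diverge at the root, and the root is already used by $\sigma_p$, then $D_s$ is genuinely not dense below that condition. Symmetrically, for $E_b$ you need uncountably many unused $b^*$ with $\sigma_p(N_T(b)\cap\operatorname{dom}(\sigma_p))\subseteq N_G(b^*)$, and almost disjointness gives no control over how many members of the family contain a prescribed finite set. The phrase ``carry an explicit promise that the partial embedding can always be extended'' is naming the problem, not solving it.

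What is actually needed, and what the paper supplies, is that the refined structure be a \emph{full $\aleph_1$-dense binary tree with tops}: then a finite set of constraints is extendible precisely when it lies along a branch, and $\aleph_1$-density guarantees uncountably many tops above any node. The paper obtains this inside $G$ via Lemma~\ref{main2} (a finite-support product of ccc posets produces a full binary tree with tops) followed by a complete-accumulation-point argument yielding $\aleph_1$-density; it extends $T$ to a full $\aleph_1$-dense binary tree with tops on the other side; and it then proves (Lemma~\ref{lem_aleph1dense}) that any two full $\aleph_1$-dense binary trees with tops are isomorphic. Crucially, the conditions in that last forcing carry, in addition to the finite partial bijection of tops, a partial isomorphism $g_p$ of the underlying trees together with a separation requirement (the current level separates the placed tops) and the coherence condition $g_p(a\restriction n_p)=f_p(a)\restriction n_p$. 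It is exactly this extra data that lets one route finitely many placed tops along disjoint upsets and makes all the dense sets dense; your poset, which records only the two partial injections $\sigma,\tau$ and no relation between the tree orders, cannot recover this. So the gap is twofold: the refinement you need is a specific tree-with-tops structure rather than a generic almost disjoint one, and even over that structure the forcing conditions must track a partial tree isomorphism.
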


Answering a question by Diestel and Leader, it follows that it is consistent with the usual axioms of set theory ZFC that there is a minor-minimal graph without a normal spanning tree. As a second consequence, we can extend Diestel and Leader's result as follows.
\begin{mythm}
\label{supermain}
Let $T$ be an arbitrary binary tree with tops. Under Martin's Axiom and the failure of the Continuum Hypothesis, a graph has a normal spanning tree if and only if it does not contain $T$, or an AT-graph as a minor. 
\end{mythm}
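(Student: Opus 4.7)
The plan is to derive Theorem~\ref{supermain} as a direct corollary of Theorem~\ref{intromain} together with the Diestel--Leader theorem recalled above. Morally, Theorem~\ref{intromain} says that in a universe satisfying MA $+$ $\neg$CH, the single graph $T$ can stand in for the entire class of $(\aleph_0,\aleph_1)$-graphs as far as minor containment is concerned. Once this is clear, no further combinatorial work is required, and the two implications of Theorem~\ref{supermain} fall out by straightforward bookkeeping.

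For the forward direction, I would argue contrapositively. Suppose $G$ contains either an AT-graph or the binary tree with tops $T$ as a minor. In the first case, the Diestel--Leader theorem immediately implies that $G$ has no normal spanning tree. In the second case, observe that $T$ is itself an $(\aleph_0,\aleph_1)$-graph (this is how binary trees with tops are defined in the paragraph preceding Theorem~\ref{intromain}), so again the Diestel--Leader theorem prohibits $G$ from having a normal spanning tree.

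For the backward direction, assume $G$ has no normal spanning tree. By the Diestel--Leader theorem, $G$ has either an AT-graph as a minor, in which case we are done, or some $(\aleph_0,\aleph_1)$-graph $H$ as a minor. In the latter case, Theorem~\ref{intromain}, applied under our assumption of MA $+$ $\neg$CH, embeds $T$ as a subgraph of $H$; taking singleton branch sets shows that any subgraph is in particular a minor, so $T \preceq H$. The minor relation is transitive (one composes branch-set decompositions by taking unions of the inner families), hence $T \preceq G$, as required.

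The main obstacle has already been overcome in Theorem~\ref{intromain}; no additional set-theoretic hypothesis, and no case analysis on the structure of $G$, is needed here. The only matters that deserve an explicit sentence in the write-up are the two elementary facts invoked above: that a subgraph is a minor, and that $\preceq$ is transitive.
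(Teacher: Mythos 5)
Your proposal is correct and is precisely the argument the paper intends: Theorem~\ref{supermain} is presented there as an immediate consequence of Theorem~\ref{intromain} combined with the Diestel--Leader characterisation, exactly as you derive it (using that $T$ is itself an $(\aleph_0,\aleph_1)$-graph for one direction, and that subgraph containment implies minor containment together with transitivity of $\preceq$ for the other).
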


However, under the Continuum Hypothesis (CH) the situation is different. Now, there exist \emph{indivisible} $(\aleph_0,\aleph_1)$-graphs, i.e.\ graphs $(\N,B)$ where for every partition $\N=A_1\dot\cup A_2$, only one of the induced graphs $(A_1,B)$ and $(A_2,B)$ contains an $(\aleph_0,\aleph_1)$-subgraph.
Note that for every indivisible graph $(\N,B)$ there is a corresponding (non-principal) ultrafilter $\script{U}$ consisting of all subsets $A \subset \N$ such that $(A,B)$ contains an $(\aleph_0,\aleph_1)$-subgraph. Indivisible graphs with associated ultrafilter $\script{U}$ are also called $\script{U}$\emph{-indivisible}. 

In \cite[8.1]{NST}, Diestel and Leader proved that binary trees with tops and indivisible graphs form two minor-incomparable classes of $(\aleph_0,\aleph_1)$-graphs. Further, they mention the following two problems involving indivisible graphs:

\begin{myquest}[Diestel and Leader]
\label{athirdclass}
Assuming CH, does every $(\aleph_0,\aleph_1)$-graph have an $(\aleph_0,\aleph_1)$-minor that is either indivisible or of binary type?
\end{myquest}

\begin{myquest}[Diestel and Leader]
\label{Uindivisible}
Assuming CH, are any two $\script{U}$-indivisible $(\aleph_0,\aleph_1)$-graphs necessarily minors of each other?
\end{myquest}

One particular property of $(\aleph_0,\aleph_1)$-graphs of binary type is that they are \emph{almost disjoint} (AD): neighbourhoods of any two distinct $B$-vertices intersect only finitely (see Section~\ref{sec2} for further details). Of course, not every $(\aleph_0,\aleph_1)$-graph has this property, as complete bipartite graphs show. However, our first result in this paper is that we can always restrict our attention to almost disjoint $(\aleph_0,\aleph_1)$-graphs: In Theorem~\ref{ADsubgraph} below, we show that every $(\aleph_0,\aleph_1)$-graph has an AD-$(\aleph_0,\aleph_1)$-subgraph.

Once we have made this reduction, we turn towards Questions~\ref{athirdclass} and \ref{Uindivisible}. In Theorem~\ref{thm_cool}, we show that Question~\ref{athirdclass} has a negative answer. Our construction refines a strategy developed by Roitman and Soukup for the combinatorical analysis of almost disjoint families. We then construct in Theorem~\ref{nathansthm} two $\script{U}$-indivisible graphs that are not minor-equivalent, answering Question~\ref{Uindivisible} in the negative.

\section{\texorpdfstring{Collections of infinite subsets of $\N$, and $(\aleph_0,\aleph_1)$-graphs}%
                        {Collections of infinite subsets of N, and A0A1-graphs}}
\label{sec2}

The following connection between collections of infinite subsets of $\N$ and $(\aleph_0,\aleph_1)$-graphs will be used frequently in this paper. Let $G$ be an $(\aleph_0,\aleph_1)$-graph with bipartition $(A,B)$, and enumeration $B=\set{b_\alpha}:{\alpha < \w_1}$. Identifying $A$ with the integers $\N$, we can encode $G$ as (multi-)set $\sequence{N(b_\alpha) \colon \alpha < \w_1}$ of infinite subsets of $\N$. Conversely, given any multiset $\sequence{N_\alpha \colon \alpha < \w_1}$ of infinite subsets of $\N$, we can form an $(\aleph_0,\aleph_1)$-graph with bipartition $(\N,B)$ by setting $N(b_\alpha):=N_\alpha$.

This correspondence allows us to translate graph-theoretic problems about $(\aleph_0,\aleph_1)$-graphs to the realm of infinite combinatorics. Let $A$ and $B$ be subsets of $\N$. If $A \setminus B$ is finite, we say that $A$ is \emph{almost contained} in $B$, or $A$ is contained in $B$ \emph{mod finite}, and write $A \subset^* B$.   Consequently, $A$ and $B$ are \emph{almost equal}, $A =^* B$, if $A \subset ^* B$ and $B \subset^* A$ (which means their symmetric difference is finite). 

Given any collection $\script{P}$ of infinite subsets of $\N$, we say that an infinite set $A \subset \N$ is a \emph{pseudo-intersection} for $\script{P}$ if $A \subset^* P$ for all $P \in \script{P}$. Every countable $\script{P}$ that is directed by $\subset^*$ has a pseudo-intersection.

A collection $\mathcal{A}$ of infinite subsets of $\N$ is an \emph{almost disjoint family} (AD-family) if $A \cap A' =^* \emptyset$ for all $A,A'$ in $\mathcal{A}$ (in other words, if the pairwise intersection of elements of $\script{A}$ is always finite). By a diagonalisation argument, every infinite AD-family can be extended to an uncountable AD-family. 

The simplest example of an $(\aleph_0,\aleph_1)$-graph is the complete bipartite graph $K_{\aleph_0,\aleph_1}$. Binary trees with tops as introduced above are strictly smaller (with respect to the minor relation $\preceq$) examples of $(\aleph_0,\aleph_1)$-graphs, as they have the property that $\cardinality{N(b) \cap N(b')} < \infty$ for all $b\neq b' \in B$. Changing our perspective, we see that in this case, the collection $\sequence{N(b_\alpha) \colon \alpha < \w_1}$ forms an almost disjoint family on $\N$. Let us call any $(\aleph_0,\aleph_1)$-graph with this last property an \emph{almost disjoint} $(\aleph_0,\aleph_1)$\emph{-graph}, or for short an AD-$(\aleph_0,\aleph_1)$-graph.

A \emph{tree} $\script{T} = \p{T, <}$ in the order-theoretic sense is a partially ordered set $T$ with a smallest element such that all predecessor sets $t^{\downarrow} = \set{s \in T}:{s < t}$ are well-ordered by $<$. The order type of $t^{\downarrow}$ is called the \emph{height} of $t$, and denoted by $\operatorname{ht}(t)$. The set of all elements of $\script{T}$ of height $\alpha$ is denoted by $\script{T}(\alpha)$, and called the $\alpha^{\text{th}}$ level of $\script{T}$. A subset $S \subset T$ of a tree $\script{T}=\p{T,<}$ is an \emph{initial subtree} if $t^{\downarrow} \subset S$ for all $t \in S$. By $\script{T}(\leq \alpha) = \bigcup_{\beta \leq \alpha} T(\beta)$ we mean the initial subtree of $\script{T}$ consisting of all elements of $\script{T}$ of height at most $\alpha$.

A linearly ordered subset of $\script{T}$ is also called a \emph{chain}. A \emph{branch} of a tree $\script{T}$ is an inclusion-maximal chain. The collection of branches is also denoted by $\script{B}(\script{T})$. For $b$ a branch and $\alpha$ an ordinal, $b \restriction \alpha$ denotes the unique element of $b \cap T(\alpha)$. An \emph{Aronszajn tree} is an uncountable tree such that all levels and all branches are countable. The \emph{binary tree of countable height} is the tree $2^{<\omega}$, the set of all finite binary sequences, ordered by extension. Similarly, a \emph{binary tree of finite height} is a tree isomorphic to $2^{<\w}({\leq}n)$ for some $n \in \N$.

In the following, we list some special types of $(\aleph_0,\aleph_1)$-graphs (suggested by Diestel and Leader \cite{NST}), and some well-known types of almost disjoint families (studied by Roitman and Soukup \cite{Luzin}), all of which will play a role in this paper.

\subsection*{ Graph-theoretic perspective (Diestel \& Leader)}
\begin{itemize}
\item $T_2^{tops}$: Let $A=2^{<\w}$ be a binary tree of height $\w$, and $B$ be a set of $\aleph_1$-many branches of $A$. Any graph isomorphic to some $(\aleph_0,\aleph_1)$-graph formed on the vertex set $A \dot\cup B$ by connecting every vertex $b \in B$ to infinitely many points on its branch is called a $T_2^{tops}$, or an $(\aleph_0,\aleph_1)$-graph of binary type.
\item \emph{full $T_2^{tops}$}: As above, but now connect every vertex $b \in B$ to all points on its branch. 
\item \emph{divisible:} An $(\aleph_0,\aleph_1)$-graph with bipartition  $(A,B)$ is divisible if there are partitions $A=A_1 \dot\cup A_2$ and $B=B_1 \dot\cup B_2$ such that both $(A_1,B_1)$ and $(A_2,B_2)$ contain $(\aleph_0,\aleph_1)$-subgraphs.
\item \emph{$\script{U}$-indivisible:} For a non-principal ultrafilter $\script{U}$, an $(\aleph_0,\aleph_1)$-graph with bipartition $(\N,B)$ is called $\script{U}$-indivisible if for all $A \in \script{U}$ we have $N(b) \subset^* A$ for all but countably many $b \in B$.
\end{itemize}

\subsection*{Set-theoretic perspective (Roitman \& Soukup)} 
\begin{itemize}
\item \emph{tree-family:} An uncountable AD-family $\script{A}$ on $\N$ is a tree-family if there is a tree-ordering $\script{T}$ of countable height on $\N$ so that for every $A \in \script{A}$ there is a branch of $\script{T}$ which almost equals $A$.
\item \emph{weak tree-family:} As above, but now it is only required that there is an injective assignment from $\script{A}$ to branches of $\script{T}$ such that every $A \in \script{A}$ is almost contained in its assigned branch.
\item \emph{hidden (weak) tree-family:} $\script{A}$ is a hidden (weak) tree family if for some countable tree $T$, $\set{T\cap a}:{a \in \script{A}}$ a (weak) tree family.
\item \emph{anti-Luzin:} An AD-family $\script{A}$ is anti-Luzin if for all uncountable $\script{B}\subset \script{A}$ there are uncountable $\script{C},\script{D} \subset \script{B}$ such that $\bigcup \script{C} \cap \bigcup \script{D}$ is finite.
\end{itemize}

\subsection*{Comparing the different notions} 
There are striking similarities between the graph-theoretic and the set-theoretic perspective. We gather dependencies between the above concepts in the following diagram. All these implications are straightforward from the definitions.

\newcolumntype{C}[1]{>{\centering\let\newline\\\arraybackslash\hspace{0pt}}m{#1}}
\begin{center}
\begin{tabular}{cc C{2cm} c C{2cm} c C{3cm}}
tree family  & $\rightarrow$ & weak tree family & $\rightarrow$ & hidden weak tree family & $\rightarrow$ & containing $T_2^{tops}$ subgraph \\
$\uparrow$ &    & $\uparrow$ & $\searrow$ &  &   & $\downarrow$ \\ 
full $T_2^{tops}$& $\rightarrow$ & $T_2^{tops}$ &  &  anti-Luzin & $\rightarrow$ & divisible \\
\end{tabular}
\end{center}

A little less straightforward is the fact that none of the arrows in the above diagram can generally be reversed. This is witnessed by the following examples.

\begin{myobs}
Under CH, there is a binary tree with tops which is not a tree family.
\end{myobs}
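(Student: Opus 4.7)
The plan is to construct the desired binary tree with tops by a transfinite recursion of length $\omega_1$, diagonalizing against all tree-orderings that could potentially witness it as a tree family. Under CH there are only $\aleph_1$-many tree-orderings of countable height on the countable set $\mathbb{N}=V(2^{<\omega})$, so I would enumerate them as $\langle \mathcal{T}_\alpha : \alpha<\omega_1\rangle$, and recursively select distinct branches $b_\alpha$ of $2^{<\omega}$ together with infinite sets $A_\alpha\subset b_\alpha$ so that $A_\alpha$ is not almost equal to any branch of $\mathcal{T}_\alpha$. The resulting family $\mathcal{A}=\{A_\alpha:\alpha<\omega_1\}$, realised as an $(\aleph_0,\aleph_1)$-graph with $B$-vertices $b_\alpha$ and $N(b_\alpha)=A_\alpha$, is then a binary tree with tops, and by construction no $\mathcal{T}_\alpha$ in the enumeration witnesses $\mathcal{A}$ as a tree family.

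The crucial combinatorial step is: for each tree-ordering $\mathcal{T}$ of countable height on $\mathbb{N}$, produce a branch $b$ of $2^{<\omega}$ (avoiding the countably many previously chosen $b_\beta$) and an infinite $A\subset b$ with $A\neq^* c$ for every branch $c$ of $\mathcal{T}$. For each branch $b$ of $2^{<\omega}$, set $S_b=\{c\in\mathcal{B}(\mathcal{T}):c\subset^* b\}$. The observation is that if $A\subset b$ is almost equal to some $c\in\mathcal{B}(\mathcal{T})$, then necessarily $c\subset^* b$, i.e.\ $c\in S_b$. Since distinct branches of $2^{<\omega}$ have pairwise finite intersection, every infinite branch of $\mathcal{T}$ can lie in $S_b$ for at most one $b$, giving $\sum_{b\in 2^\omega}|S_b|\le |\mathcal{B}(\mathcal{T})|\le \aleph_1$. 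For any $b$ for which $S_b$ represents only countably many almost-equivalence classes, a straightforward diagonalization succeeds: there are $2^{\aleph_0}=\aleph_1$ almost-equivalence classes of infinite subsets of $b$ and only countably many forbidden ones, so some $A\subset b$ works.

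The principal obstacle is the case when $S_b$ has $\aleph_1$ distinct $=^*$-classes for many $b\in 2^\omega$, where the counting argument offers no slack. I would handle this by exploiting the tree structure of $\mathcal{T}$: for any two $c_1,c_2\in S_b$ with neither almost contained in the other, the set $A=(c_1\cup c_2)\cap b$ satisfies $A\setminus c_i=^* c_{3-i}\setminus c_i$, which is infinite, so $A\neq^* c_1,c_2$; and for any further $c\in S_b$, since $c_1\cap c$ and $c_2\cap c$ are initial segments of $c$ in $\mathcal{T}$, a careful level-by-level splitting analysis forces $A\triangle c$ to be infinite as well. The degenerate subcase where $S_b$ is almost linearly ordered by $\subset^*$ (a tower inside $b$) must be handled separately, for instance by choosing $A\subset b$ to be a sufficiently sparse subset obtained by a pseudo-intersection-then-diagonalize construction so as to evade every member of the tower. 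Making this structural analysis work uniformly for tree-orderings of arbitrary countable height, where $S_b$ need not even be almost disjoint, is the main technical hurdle.
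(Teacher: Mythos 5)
Your overall framework is exactly the paper's: fix a binary tree order on $\N$, use CH to enumerate all tree orders of countable height on $\N$ as $\langle \mathcal{T}_\alpha : \alpha<\omega_1\rangle$, and at stage $\alpha$ choose a fresh branch $b_\alpha$ of $2^{<\omega}$ together with an infinite $N_\alpha\subset b_\alpha$ not almost equal to any branch of $\mathcal{T}_\alpha$. The problem is the key step, which you correctly isolate but do not close: you reduce to $S_b=\{c\in\mathcal{B}(\mathcal{T}): c\subset^* b\}$, note that the counting argument only handles the case of countably many $=^*$-classes, and explicitly leave the uncountable case as ``the main technical hurdle.'' That case is not vacuous: $\mathcal{T}$ restricted to $b$ can be a binary tree all of whose branches are maximal in $\mathcal{T}$, so $S_b$ can be an uncountable almost disjoint family, and since there are $\aleph_1$ branches $b$ and up to $\aleph_1$ branches of $\mathcal{T}$, the global count $\sum_b|S_b|\le\aleph_1$ gives no slack. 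Neither the gestured ``level-by-level splitting analysis'' nor the ``pseudo-intersection-then-diagonalize'' step is carried out, so as written the proposal has a genuine gap.

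The missing ingredient is a structural fact about branches of a single tree, and it is what makes the paper's one-line recipe (choose $N_\alpha\subset b_\alpha$ with $N_\alpha\subsetneq^* c'$ for some branch $c'$ of $\mathcal{T}_\alpha$ meeting $b_\alpha$ infinitely) work: if $c,c'$ are infinite branches of the same tree with $c\subset^* c'$, then $c\cap c'$ is a common initial segment of both (maximal chains are downward closed) and $c\setminus c'$ is finite, so $c$ is almost equal to an initial segment of $c'$; and the initial segments of $c'$ form a well-ordered chain of countable order type. So instead of fighting all of $S_{b_\alpha}$ at once, anchor $N_\alpha$ inside $c'\cap b_\alpha$ for a single branch $c'$ of $\mathcal{T}_\alpha$ meeting $b_\alpha$ infinitely (if no branch does, $N_\alpha=b_\alpha$ already works). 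Any branch $c$ with $c=^*N_\alpha$ then satisfies $c\subset^* c'$, hence is almost equal to one of only countably many initial segments of $c'$, and your easy diagonalization finishes. The same fact repairs both of your partial cases: for $\subset^*$-incomparable $c_1,c_2\in S_b$ the set $(c_1\cup c_2)\cap b$ does work, since $c=^*(c_1\cup c_2)\cap b$ would force $c_1,c_2\subset^* c$, making both almost-initial-segments of $c$ and hence $\subset^*$-comparable; and the ``tower'' case is automatically countable, because along a $\subsetneq^*$-increasing chain of branches the order types strictly increase while remaining bounded by the countable height of $\mathcal{T}$.
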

\begin{proof}[Construction sketch] Consider a binary tree order $\script{T}$ on $\N$ and, using CH, enumerate its branches $\script{B}(\script{T}) = \set{b_\alpha}:{\alpha < \w_1}$. In order to diagonalize against all possible tree families, enumerate all tree orders of countable height on $\N$ as $\set{\script{T}_\alpha}:{\alpha < \w_1}$. Now if $\cardinality{b_\alpha \cap b} = \infty$ for some branch $b$ of $\script{T}_\alpha$, then choose $N_\alpha \subset b_\alpha$ such that $N_\alpha \subsetneq^* b$. Otherwise, put $N_\alpha = b_\alpha$. Then $\Sequence{N_\alpha}:{\alpha < \omega_1}$ is as desired. 
\end{proof}

Hence, the implications `tree family $\rightarrow$ weak tree family' and `full $T_2^{tops} \rightarrow T_2^{tops}$' cannot be reversed.

Next, if in a full $T_2^{tops}$ one additionally makes all tops adjacent to one special node of the tree, one obtains a tree family which cannot be a $T_2^{tops}$, because in a $T_2^{tops}$ without isolated points on the countable side, only the root of the tree can be simultaneously adjacent to all tops. In particular, the implications `$T_2^{tops} \rightarrow$ weak tree family' and `full $T_2^{tops} \rightarrow$ tree family' cannot be reversed.

Hidden weak tree families need not be anti-Luzin, see \cite[p.58]{Luzin}. In particular, the implications `weak tree family $\rightarrow$ hidden weak tree family' and `anti-Luzin $\rightarrow$ divisible' cannot be reversed. In Theorem~\ref{thm_cool} below, we construct under CH an anti-Luzin family which contains no $T_2^{tops}$ subgraph, so the implications `weak tree family $\rightarrow$ anti-Luzin' and `containing $T_2^{tops}$ subgraph $\rightarrow$ divisible' cannot be reversed. Finally, the implication `hidden weak tree family $\rightarrow$ containing a $T_2^{tops}$ subgraph' cannot be reversed:

\begin{myobs}
Under CH, there is an AD-family $\Sequence{N_\alpha}:{\alpha < \omega_1}$ containing a $T_2^{tops}$ subgraph but which is not a hidden weak tree family.
\end{myobs}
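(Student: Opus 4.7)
Plan: I would proceed by a transfinite recursion of length $\omega_1$ under CH, interleaving a structural subagenda that installs a $T_2^{tops}$ inside the family with a diagonalization subagenda that kills every potential witness to being a hidden weak tree family. Specifically, I would fix a binary tree order $\tau$ on $\mathbb{N}$ and enumerate $\omega_1$-many of its branches as $\langle c_\gamma : \gamma < \omega_1 \rangle$; by CH I would also enumerate as $\langle (T_\beta, \script{S}_\beta) : \beta < \omega_1 \rangle$ all pairs consisting of a countable $T_\beta \subset \mathbb{N}$ and a countable-height tree order $\script{S}_\beta$ on $T_\beta$ with uncountably many branches --- the remaining candidates cannot witness a hidden weak tree family, since an injection of $\omega_1$ into the branch set is required.

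I would then partition $\omega_1 = E \,\dot\cup\, O$ into two $\aleph_1$-sized pieces. At a structural stage $\alpha \in E$ bookkept to value $\gamma$, set $N_\alpha := c_\gamma$: the subfamily $\{N_\alpha : \alpha \in E\}$ is then AD (distinct branches of a tree have finite intersection) and already witnesses a $T_2^{tops}$ subgraph using $\tau$ as the binary backbone. At a diagonalization stage $\alpha \in O$ bookkept to value $\beta$, pick $N_\alpha$ to be an infinite subset of $\mathbb{N}$, almost disjoint from all previously chosen $N_\gamma$ with $\gamma < \alpha$, which contains an infinite antichain of $\script{S}_\beta$. Since any branch of $\script{S}_\beta$ is a chain and meets such an antichain in at most one element, this forces $N_\alpha \cap T_\beta$ to fail to be almost contained in any branch of $\script{S}_\beta$, so no injective assignment $f : \omega_1 \to \script{B}(\script{S}_\beta)$ with $N_\alpha \cap T_\beta \subset^* f(\alpha)$ can exist, and $(T_\beta, \script{S}_\beta)$ fails to witness the hidden weak tree family property. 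Standard bookkeeping ensures every candidate is addressed at some $\alpha \in O$.

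The central difficulty is the existence claim at the diagonalization stages: given countably many previously chosen $N_\gamma$, we need to locate an infinite antichain of $\script{S}_\beta$ almost disjoint from all of them. A countable tree with uncountably many branches must have infinite width (since a width-$n$ tree carries at most $n$ branches, as any $n+1$ pairwise-divergent branches force an antichain of size $n+1$), and in fact carries a canonical $\aleph_1$-indexed almost disjoint family of infinite antichains --- for instance, for each non-isolated branch $c$ of $\script{S}_\beta$ take the antichain $c^*$ of chosen siblings along the branching points of $c$. On the other hand, the previous $N_\gamma$'s themselves form an AD family, so by a standard almost-disjointness counting argument only countably many of the candidate antichains can be ``swallowed'' by any finite union of previous $N_\gamma$'s, leaving $\aleph_1$-many admissible antichains; a diagonal pruning against the countable list of previous $N_\gamma$ then yields the desired $N_\alpha$. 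Handling this combinatorial interaction between the AD structure and the tree structure is where the construction gets delicate, and I expect it to be the main obstacle.
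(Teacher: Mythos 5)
Your overall architecture --- a CH-length recursion interleaving a structural agenda (planting branches of a fixed binary tree $\tau$ to create the $T_2^{tops}$ subgraph) with a diagonalization agenda (defeating each candidate countable tree $\script{S}_\beta$ by inserting a set whose trace contains an infinite antichain) --- is reasonable in outline, and your reductions are correct: candidates with only countably many branches are harmless, and an infinite antichain inside $N_\alpha \cap T_\beta$ does rule out any branch assignment for $N_\alpha$. But there is a genuine gap: nothing in your construction makes the final family almost disjoint. Each $N_\alpha$ with $\alpha \in O$ is only required to be almost disjoint from the \emph{previously} chosen sets, while the structural sets $N_{\alpha'} = c_{\gamma'}$ chosen at \emph{later} stages $\alpha' \in E$ come from a fixed list of branches of $\tau$ and are in no way adapted to the earlier diagonalization sets. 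An infinite antichain of an arbitrary tree order $\script{S}_\beta$ on an arbitrary $T_\beta \subset \N$ bears no relation to $\tau$ and can meet uncountably many branches of $\tau$ in an infinite set (for instance, a candidate tree can be chosen so that the set of all nodes of $\tau$ at even levels is one of its antichains); once one of those branches is later installed as a structural $N_{\alpha'}$, almost disjointness fails. The paper sidesteps this entirely by drawing \emph{every} $N_\alpha$ from the branch structure of the single fixed tree: each $N_\alpha$ is a union of at most two hitherto unused branches of $\tau$, which makes the family automatically AD and automatically a $T_2^{tops}$ (each $N_\alpha$ contains a fresh full branch), while the diagonalization against $\script{T}_\alpha$ is carried out by choosing \emph{which} branches to unite --- either a single fresh branch whose trace splits over $\script{T}_\alpha$, or a pair of fresh branches trapped in two different branches of $\script{T}_\alpha$, whose union then fits into no single branch. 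If you keep your two-agenda design, you must at least choose the structural branches adaptively, and you then inherit the problem that an earlier diagonalization set may obstruct every remaining branch of $\tau$.

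Separately, the counting argument you invoke at the ``central difficulty'' is not valid: almost disjointness of the previous $N_\gamma$ does not imply that only countably many members of your AD family of candidate antichains can be swallowed by a finite union of them --- a single infinite set can almost contain uncountably many members of an AD family. The step is salvageable by a different argument: either some candidate antichain $A$ is not almost covered by any finite union of previous sets, in which case the infinite traces $N_\gamma \cap A$ form a countable, hence non-maximal, AD family on $A$ and one can prune $A$ to an infinite antichain almost disjoint from all previous sets; or every candidate antichain is so covered, in which case some previous $N_\gamma$ already contains an infinite antichain of $\script{S}_\beta$ and that candidate is already defeated. But as written, your justification for the step you yourself identify as the main obstacle does not go through.
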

\begin{proof}[Construction Sketch]
Consider a binary tree order $\script{T}$ on $\N$ and enumerate its branches $\script{B}(\script{T}) = \set{b_\alpha}:{\alpha < \w_1}$. Enumerate all tree orders of countable height with groundset some infinite subset of $\N$ as $\set{\script{T}_\alpha}:{\alpha < \w_1}$. Every $N_\alpha$ will be the union of at most two $b_{\beta_1(\alpha)}$ and $b_{\beta_2(\alpha)}$. At step $\alpha < \w_1$, we have $\beta=\sup \set{b_{\beta_1(\gamma)},b_{\beta_2(\gamma)}}:{\gamma<\alpha} < \w_1$. If there is $b_\delta$ with $\delta > \beta$ such that $b_\delta$ is not almost contained in a single branch of $\script{T}_\alpha$, put $N_\alpha = b_\delta$. If all $b_\delta$ with $\delta > \beta$ are almost contained in the same branch of $\script{T}_\alpha$, put $N_\alpha = b_{\beta+1}$. Otherwise, there are $\beta_1(\alpha) > \beta$ and $\beta_2(\alpha) > \beta$ such that $b_{\beta_1(\alpha)}$ and $b_{\beta_2(\alpha)}$ are almost contained in different branches of $\script{T}_\alpha$. Put $N_\alpha = b_{\beta_1(\alpha)} \cup b_{\beta_2(\alpha)}$.  Then it is easily checked that $\Sequence{N_\alpha}:{\alpha < \omega_1}$ is as desired. 
\end{proof}

However, under MA+$\neg$CH, every $<\cont$-sized AD family is a hidden weak tree family \cite[4.4]{Luzin}, so the last construction cannot be done in ZFC alone.

\section{\texorpdfstring{Finding almost disjoint $(\aleph_0,\aleph_1)$-subgraphs}%
                        {Finding almost disjoint A0A1-subgraphs}}

Almost disjoint $(\aleph_0,\aleph_1)$-graphs are natural candidates for smaller obstruction sets in Diestel and Leader's result. In this section, we prove that indeed, every $(\aleph_0,\aleph_1)$-graph contains an almost disjoint $(\aleph_0,\aleph_1)$-subgraph. 

We say that a collection $\mathcal{F}$ of infinite subsets of some countably infinite set has an \emph{almost disjoint refinement} if there is a choice of infinite subsets $A_F \subset F$ such that $\mathcal{A}=\set{A_F}:{F \in \mathcal{F}}$ is an almost disjoint family.

\begin{mythm}[Baumgartner, Hajnal and Mate; Hechler]
\label{refinement}
Every $< \continuum$-sized collection of infinite subsets of $\N$ has an almost disjoint refinement.
\end{mythm}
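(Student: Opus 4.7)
The plan is to proceed by transfinite recursion of length $\kappa < \continuum$, enumerating $\script{F} = \Sequence{F_\alpha}:{\alpha < \kappa}$ and choosing infinite $A_\alpha \subset F_\alpha$ one stage at a time so that $\Set{A_\beta}:{\beta \leq \alpha}$ is an AD family at every stage.

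To support the inductive step, I would fix at the outset a continuum-sized AD family $\script{M} = \Set{M_x}:{x \in 2^\omega}$ on $\N$; such families exist in ZFC (for instance, as branches of the binary tree $2^{<\omega}$ after identifying $2^{<\omega}$ with $\N$). At stage $\alpha$, the aim is to pick a parameter $x_\alpha \in 2^\omega$ with $|F_\alpha \cap M_{x_\alpha}| = \infty$ and distinct from all $x_\beta$ chosen previously, then set $A_\alpha := F_\alpha \cap M_{x_\alpha}$. Pairwise almost-disjointness then comes for free: $A_\alpha \cap A_\beta \subset M_{x_\alpha} \cap M_{x_\beta}$, which is finite whenever $x_\alpha \neq x_\beta$ by almost-disjointness of $\script{M}$.

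This reduces the theorem to the claim that at every stage $\alpha$ the candidate set $I_\alpha := \Set{x \in 2^\omega}:{|F_\alpha \cap M_x| = \infty}$ has cardinality strictly greater than $|\alpha|$, so that a fresh parameter is always available. This is the main obstacle, and the point at which the hypothesis $\kappa < \continuum$ must enter: for a naive choice of $\script{M}$, some $F_\alpha$ may give $I_\alpha$ as small as a single point (e.g. if $F_\alpha$ is almost equal to one $M_x$). The fix is either to take $\script{M}$ to be a \emph{completely separable} MAD family, for which $|I_\alpha| = \continuum$ automatically whenever $F_\alpha$ is infinite, or, more elementarily, to build $\script{M}$ \emph{during} the recursion: whenever $|I_\alpha|$ is about to fall at or below $|\alpha|$, enlarge $\script{M}$ by inserting $\continuum$-many fresh AD infinite subsets of $F_\alpha$ (using that any infinite set admits a continuum-sized AD family). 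Since at every stage only $|\alpha| < \continuum$ parameters have been used, a fresh $x_\alpha$ in the enlarged $\script{M}$ is always available, and the recursion runs through all $\kappa$ stages to produce the desired almost disjoint refinement.
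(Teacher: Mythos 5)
You have correctly reduced the theorem to the existence of an almost disjoint family $\script{M}$ that \emph{covers} $\script{F}$ in Hechler's sense (each $F_\alpha$ meets at least $\kappa$ members of $\script{M}$ infinitely); given such an $\script{M}$, your greedy choice of distinct parameters $x_\alpha$ and the observation $A_\alpha\cap A_\beta\subset M_{x_\alpha}\cap M_{x_\beta}$ do finish the proof. But producing the covering family is the entire content of the theorem, and both of your proposed fixes for that step break down. For the first fix: the existence of completely separable MAD families in ZFC is a well-known open problem (it is known only under extra hypotheses such as $\continuum<\aleph_\omega$), so you cannot simply "fix one at the outset"; and even granting one, the claim that $|I_F|=\continuum$ for every infinite $F$ is false --- take $F=M_{x_0}$ to be a member of the family itself, so that $F\cap M_x\subset M_{x_0}\cap M_x$ is finite for every $x\neq x_0$ and $I_F=\{x_0\}$. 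This is not a pathological case you can ignore: the $F_\alpha$ are arbitrary.

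For the second fix, inserting $\continuum$-many fresh almost disjoint subsets of $F_\alpha$ into $\script{M}$ requires first finding an infinite $G\subset F_\alpha$ almost disjoint from \emph{every} current member of $\script{M}$, i.e.\ the trace $\{M\cap F_\alpha : M\in\script{M},\ |M\cap F_\alpha|=\infty\}$ must fail to be maximal as an AD family on $F_\alpha$. When $|I_\alpha|\le\aleph_0$ this is automatic (countable AD families are never maximal), so your scheme does survive for $\kappa=\aleph_1$; but for $\kappa\ge\aleph_2$ you only know $|I_\alpha|\le|\alpha|$, which may be $\aleph_1$, and MAD families of size $\aleph_1<\continuum$ consistently exist (e.g.\ when $\mathfrak{a}=\aleph_1<\continuum$), so non-maximality of the trace is a genuine obligation that your argument never discharges. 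The paper's proof avoids this trap by a different device: it splits each $F_\alpha$ into a $\kappa^+$-sized AD family $\script{S}_\alpha$ of subsets of $F_\alpha$, uses the first $\script{S}_\alpha$ covering $\{F_\beta\}$ to assign $\beta$ a "column", and exploits the regularity of $\kappa^+$ to find one ordinal $\eta<\kappa^+$ above which all the finitely-intersecting indices have been pushed, so that choosing distinct $\xi(\beta)>\eta$ settles all pairs at once. If you want to salvage your recursion, you would need to prove that the covering family can always be arranged --- which is essentially Steps 1--3 of the paper's argument, not a routine enlargement.
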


The theorem is due to Baumgartner, Hajnal and Mate \cite[2.1]{Baum}, and independently due to Hechler \cite[2.1]{Hechler}. For convenience, we will indicate the proof below.

\begin{mycor}
\label{cor_refinement}
Assume $\neg CH$. Every $(\aleph_0,\aleph_1)$-graph has a spanning AD-$(\aleph_0,\aleph_1)$-subgraph. 
\end{mycor}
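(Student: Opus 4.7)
The plan is to apply Theorem~\ref{refinement} directly. Assuming $\neg \mathrm{CH}$ we have $\aleph_1 < \cont$, so any $\aleph_1$-sized family of infinite subsets of $\N$ admits an almost disjoint refinement. Given an $(\aleph_0,\aleph_1)$-graph $G$ with bipartition $(A,B)$, I would identify $A$ with $\N$ and pass to the (multi-)collection $\Sequence{N(b)}:{b \in B}$ of $B$-neighbourhoods, each of which is infinite by assumption. Theorem~\ref{refinement} then supplies infinite sets $A_b \subset N(b)$ for $b \in B$ such that $\Sequence{A_b}:{b \in B}$ is an almost disjoint family.

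From this refinement I would build the subgraph $G'$ of $G$ on the full vertex set $A \cup B$ whose edges are exactly $\set{ab}:{b \in B,\; a \in A_b}$. Then $G'$ is spanning by construction, and $N_{G'}(b) = A_b$ is infinite for every $b \in B$, so $G'$ is again an $(\aleph_0,\aleph_1)$-graph. Since its $B$-neighbourhoods are pairwise almost disjoint by choice of refinement, $G'$ is the desired spanning AD-$(\aleph_0,\aleph_1)$-subgraph.

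All the substantive work is quoted from Theorem~\ref{refinement}, which the proof uses as a black box; everything else is just packaging the refinement back into a graph. The only subtle point I would want to flag is that $\sequence{N(b) : b \in B}$ is a priori a multi-set rather than a set, since distinct $B$-vertices may happen to have the same neighbourhood in $G$. This is not a genuine obstacle: the standard proof of Theorem~\ref{refinement} is a transfinite recursion of length $\aleph_1$ in which pseudo-intersections of previously chosen sets are diagonalised against the next member of the collection, and this argument goes through verbatim when the collection is indexed by $\omega_1$ rather than presented as a set of distinct subsets.
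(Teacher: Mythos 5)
Your proposal is correct and matches the paper's own proof: both simply invoke Theorem~\ref{refinement} (using $\neg$CH to get $\aleph_1 < \cont$) and translate the almost disjoint refinement of the neighbourhoods back into a spanning subgraph by trimming edges at each $B$-vertex. Your remark about the multiset issue is a fair point that the paper handles implicitly via its multiset encoding in Section~\ref{sec2}, and your observation that the refinement argument works for $\omega_1$-indexed families with repetitions is accurate.
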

\begin{proof}
An almost disjoint refinement corresponds, in the graph-theoretic perspective, to a subgraph obtained by deleting, at every vertex on the $B$-side, co-infinitely many incident edges. Since we did not remove any vertices, we obtain indeed a spanning AD-$(\aleph_0,\aleph_1)$-subgraph. 
\end{proof}

Theorem \ref{refinement} does not hold for families of size $\continuum$ (consider the collection of all infinite subsets of $\N$). Still, we can prove that the corresponding result for subgraphs is true nonetheless (but we can no longer guarantee spanning subgraphs).

\begin{mythm}
\label{ADsubgraph}
Every $(\aleph_0,\aleph_1)$-graph has an AD-$(\aleph_0,\aleph_1)$-subgraph.
\end{mythm}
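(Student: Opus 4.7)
The plan is to work in the set-theoretic perspective of Section~\ref{sec2}, encoding the given $(\aleph_0,\aleph_1)$-graph as a family $\langle N_\alpha : \alpha < \omega_1 \rangle$ of infinite subsets of $\mathbb{N}$; the task reduces to finding an uncountable $I \subseteq \omega_1$ together with infinite $M_\alpha \subseteq N_\alpha$ for $\alpha \in I$ such that the family $\{M_\alpha : \alpha \in I\}$ is almost disjoint, and this yields the desired AD-$(\aleph_0,\aleph_1)$-subgraph.

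The main tool is a transfinite recursion of length $\omega_1$ that builds the refinement one term at a time. At a successor stage $\xi$, the previously chosen family $\{M_{\alpha_\eta} : \eta < \xi\}$ is countable, and the ZFC fact $\mathfrak{p} \geq \aleph_1$ guarantees an infinite pseudo-intersection of the countable collection $\{N_{\alpha_\xi} \setminus \bigcup_{\eta \in F} M_{\alpha_\eta} : F \in [\xi]^{<\omega}\}$; this pseudo-intersection serves as $M_{\alpha_\xi}$, provided each set in the collection is infinite -- equivalently, provided $N_{\alpha_\xi}$ is not almost contained in any finite union of previously chosen $M_{\alpha_\eta}$.

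The heart of the argument is the stuck case. Suppose at some stage $\xi$ every unused $\beta \in \omega_1$ comes with a finite $F_\beta \subseteq \xi$ such that $N_\beta \subseteq^* \bigcup_{\eta \in F_\beta} M_{\alpha_\eta}$. A pigeonhole using that $[\xi]^{<\omega}$ is countable gives a fixed finite $F \subseteq \xi$ and an uncountable set of such $\beta$'s, and a second pigeonhole within $F$, using the almost disjoint decomposition of $\bigcup_{\eta \in F} M_{\alpha_\eta}$, yields a single $\eta_0 \in F$ and an uncountable $J \subseteq \omega_1$ with $|N_\beta \cap M_{\alpha_{\eta_0}}| = \infty$ for all $\beta \in J$. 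The key observation is that any almost disjoint refinement of $\{N_\beta \cap M_{\alpha_{\eta_0}} : \beta \in J\}$ constructed inside the countable set $M_{\alpha_{\eta_0}}$ is automatically almost disjoint from every retained $M_{\alpha_\eta}$ with $\eta \neq \eta_0$, because those were already almost disjoint from $M_{\alpha_{\eta_0}}$; thus sacrificing $M_{\alpha_{\eta_0}}$ reduces the full problem to the very same problem on the smaller countable groundset $M_{\alpha_{\eta_0}}$.

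The main obstacle is the resulting circularity of this reduction. I would close the loop by interleaving the main recursion with its restarts: whenever the construction gets stuck, pivot into the sacrificed $M_{\alpha_{\eta_0}}$ and continue the recursion there while preserving all previously retained $M_{\alpha_\eta}$'s, so that each phase contributes at least one new term to the global AD-family. The successive pivot groundsets form a $\subseteq^*$-descending transfinite sequence of infinite subsets of $\mathbb{N}$, whose countable-length limits are handled by passing to pseudo-intersections using $\mathfrak{t} \geq \aleph_1$; a careful bookkeeping is required to ensure that uncountably many candidate indices $\beta$ survive at every stage, which is the main technical point. Across $\omega_1$-many successor stages the assembled AD-family reaches size $\aleph_1$, yielding the desired AD-$(\aleph_0,\aleph_1)$-subgraph.
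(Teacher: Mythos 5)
Your reduction to the combinatorial setting, the greedy successor step via pseudo-intersections of countable directed families, and the pigeonhole analysis of the stuck case are all correct, and the observation that a refinement built inside the sacrificed set $M_{\alpha_{\eta_0}}$ is automatically almost disjoint from the other retained sets is the right kind of reduction. But the proposal stops exactly where the work begins, and the two things you defer to ``careful bookkeeping'' are genuine gaps, not technicalities. First, the counting does not close: each pivot removes one retained set and the ensuing phase is only guaranteed to add one back, so the retained family can oscillate forever without becoming uncountable (sacrifice $M$, add a single $M'\subset M$, get stuck, sacrifice $M'$, and so on), and the ``permanent'' part of the family need never grow. Second, and more seriously, the limit stages fail as stated: if $P_0\supseteq^* P_1\supseteq^*\cdots$ are the pivot groundsets and $J_n=\set{\beta}:{\cardinality{N_\beta\cap P_n}=\infty}$ the survivor sets, then any pseudo-intersection $P$ has its own survivor set contained in $\bigcap_n J_n$, and a decreasing $\w$-sequence of uncountable subsets of $\w_1$ can have empty intersection. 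Nothing in the pivoting mechanism prevents the candidates from dwindling to a countable set at the very first limit, and no choice of pseudo-intersection can recover them.

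What is missing is precisely the dichotomy around which the paper's proof is organised. The paper asks whether there is an infinite $A\subset\N$ such that \emph{every} infinite $C\subset A$ meets uncountably many $N_\beta$ infinitely. If yes, this hereditary largeness is exactly the bookkeeping you need: it guarantees uncountably many candidates survive at every stage, including limits, and one then deliberately builds an $\w_1$-length $\subset^*$-descending chain $\Sequence{C_\alpha}:{\alpha<\w_1}$ with $C_\alpha\subset N_{\mu_\alpha}$, which is handled by a separate chain lemma: either the chain has a pseudo-intersection, on which any uncountable AD-family yields a covering refinement, or it has none, in which case one may arrange that the successive differences are infinite, and these are already almost disjoint. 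If no, then hereditarily every infinite set contains an infinite $C$ with $K_C=\set{\beta}:{\cardinality{N_\beta\cap C}=\infty}$ countable, and a completely different argument applies: choose such $C_\alpha\subset N_\alpha$ and a sparse subsequence $\nu_\alpha$ lying above all earlier $K_{C_{\nu_\beta}}$, so that almost-disjointness comes for free. Your scheme contains no analogue of either half; in particular it has no mechanism for tower-like configurations, which is what the ``no pseudo-intersection'' branch of the chain lemma exists to handle. Until the limit stages and the growth of the retained family are actually controlled, the proof is incomplete at its central point.
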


First, a piece of notation. Let $\mathcal{F}$ be a collection of infinite subsets of $\N$, and $\script{A}$ be an almost disjoint family. Following Hechler, \cite{Hechler}, we say that $\script{A}$ \emph{covers} $\script{F}$ if for every $F \in \script{F}$, the collection $\set{A \in \script{A}}:{\cardinality{F \cap A}=\infty}$ is of size $\cardinality{\script{A}}$. 

Hechler showed that a collection $\script{F}$ of infinite subsets of $\N$ has an almost disjoint refinement if and only if there is an almost disjoint family of size $\cardinality{\script{F}}$ covering $\script{F}$ \cite[2.3]{Hechler}. We shall only make use of the backwards implication, the proof of which is nicely illustrated in the claim below.

\begin{proof}[Proof of Theorem~\ref{ADsubgraph}]
Suppose we are given an $(\aleph_0,\aleph_1)$-graph $G$ with bipartition $(\N,B)$, an enumeration $B=\set{b_\alpha}:{\alpha < \w_1}$ and neighbourhoods $N_\alpha = N(b_\alpha)$. 

\begin{myclaim*}
If $\set{N_\alpha}:{\alpha < \w_1}$ forms an uncountable decreasing chain mod finite (i.e.\ $N_\beta \subseteq^* N_\alpha$ for all $\alpha < \beta$), then $G$ has an AD-$(\aleph_0,\aleph_1)$-subgraph.
\end{myclaim*}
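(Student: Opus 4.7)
The plan is to find an uncountable subset $\{\alpha_\gamma : \gamma<\omega_1\}$ of $\omega_1$ along which the chain is \emph{strictly} decreasing mod finite, and then take the ``successive differences'' as our AD-refinement. I would split into two cases according to whether the chain eventually stabilises mod finite or not.

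\emph{Case 1: Some $\alpha_0$ has uncountably many $=^*$-copies.} Suppose there exists $\alpha_0 < \omega_1$ such that the set $B' = \{\beta < \omega_1 : N_\beta =^* N_{\alpha_0}\}$ has size $\aleph_1$. Since $\mathbb{N}$ admits an AD family of size $\mathfrak{c} \geq \aleph_1$, choose an AD family $\{X_\beta : \beta \in B'\}$ of infinite subsets of $N_{\alpha_0}$. For each $\beta \in B'$ set $A_\beta := X_\beta \cap N_\beta$. Because $X_\beta \setminus N_\beta \subseteq N_{\alpha_0} \setminus N_\beta$ is finite, each $A_\beta$ is infinite; and $\{A_\beta : \beta \in B'\}$ is AD as a subfamily of $\{X_\beta : \beta \in B'\}$. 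So the $A_\beta$'s witness an AD-$(\aleph_0,\aleph_1)$-subgraph of $G$ on the vertices $\{b_\beta : \beta \in B'\}$.

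\emph{Case 2: No such $\alpha_0$ exists.} Then for every $\alpha < \omega_1$ there are uncountably many $\beta > \alpha$ with $N_\beta \neq^* N_\alpha$, and for any such $\beta$ the chain property gives $N_\alpha \setminus N_\beta$ infinite. I would now recursively construct a strictly increasing sequence $\langle \alpha_\gamma : \gamma < \omega_1\rangle$ of countable ordinals as follows. Pick $\alpha_0 := 0$. At a successor step, given $\alpha_\gamma$, use the preceding observation to pick $\alpha_{\gamma+1} > \alpha_\gamma$ with $N_{\alpha_\gamma} \setminus N_{\alpha_{\gamma+1}}$ infinite. At a limit step, just take $\alpha_\gamma$ to be any countable ordinal strictly above $\sup_{\delta<\gamma}\alpha_\delta$. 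Define $A_\gamma := N_{\alpha_\gamma} \setminus N_{\alpha_{\gamma+1}}$ for each $\gamma < \omega_1$; each $A_\gamma$ is infinite by construction. For $\gamma < \delta$ we have $A_\delta \subseteq N_{\alpha_\delta} \subseteq^* N_{\alpha_{\gamma+1}}$, so $A_\delta \setminus N_{\alpha_{\gamma+1}}$ is finite, while $A_\gamma \cap N_{\alpha_{\gamma+1}} = \emptyset$; hence $A_\gamma \cap A_\delta$ is finite. Thus $\{A_\gamma : \gamma<\omega_1\}$ is an AD family, and on the vertices $\{b_{\alpha_\gamma} : \gamma<\omega_1\}$ these yield an AD-$(\aleph_0,\aleph_1)$-subgraph of $G$.

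I do not anticipate a genuine obstacle here: the decreasing-chain hypothesis does the work by forcing the ``mod-finite differences'' between comparable sets to sit essentially outside every later term in the chain, which is exactly what turns differences of consecutive terms into an AD family. The only mildly delicate point is the dichotomy itself, where one must rule out the pathological situation in which $=^*$-equivalence classes exhaust the whole $\omega_1$; this is handled by Case~1 via a standard almost disjoint splitting of a single infinite set.
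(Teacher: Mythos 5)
Your proof is correct and follows essentially the same two-case strategy as the paper: in one case you pull an uncountable AD family back through a common core set by intersecting, and in the other you pass to a cofinal subsequence along which consecutive mod-finite differences are infinite and take those differences as the AD family. The only (harmless) divergence is the case split itself: the paper distinguishes whether the chain has an infinite pseudo-intersection, whereas you distinguish whether some $=^*$-equivalence class is uncountable --- and in fact your Case 1 hypothesis implies the paper's (an uncountable $=^*$-class is convex, hence cofinal in $\omega_1$, so $N_{\alpha_0}$ is a pseudo-intersection of the whole chain), so both dichotomies are exhaustive and both arguments go through.
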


For the claim, consider two alternatives. Either, $\script{N}=\set{N_\alpha}:{\alpha < \w_1}$ has an infinite pseudo-intersection $A$, in which case any uncountable AD-family $\script{A}=\set{A_\alpha}:{\alpha < \w_1}$ on $A$ covers $\set{N_\alpha}:{\alpha < \w_1}$. Picking $N'_\alpha = N_\alpha \cap A_\alpha$ readily provides an almost disjoint refinement of $\script{N}$. And if $\script{N}$ does not have an infinite pseudo-intersection, then moving to a subgraph, we may assume that $C_\alpha= N_\alpha \setminus N_{\alpha + 1}$ is infinite for all $\alpha < \w_1$. Now if $\alpha < \beta$ then $C_\alpha \cap C_\beta \subset N_\alpha \setminus N_{\alpha +1 } \cap N_\beta$ is finite, as $N_\beta \setminus N_{\alpha +1}$ is finite by assumption. So $\set{C_\alpha}:{\alpha < \w_1}$ gives rise to an AD-$(\aleph_0,\aleph_1)$-subgraph of $G$, establishing the claim.

Now suppose there exists an infinite set $A \subset \N$ with the property that for every infinite $C \subset A$ there is an uncountable set $K_C=\set{\beta < \w_1}:{\cardinality{N_\beta \cap C}=\infty}$. Let us construct, by recursion,  
\begin{enumerate}
\item a faithfully indexed set $\set{N_{\mu_\alpha}}:{\alpha < \w_1} \subset \script{N}$, and
\item infinite subsets $C_\alpha \subset N_{\mu_\alpha} \cap A$ such that $C_\alpha \subset^* C_\beta$ for all $ \alpha > \beta$.
\end{enumerate} 

First, let $\mu_0=\min K_A$ and put $C_0 = A \cap N_{\mu_0}$, an infinite subset of $A$. Next, let $\alpha < \w_1$ and suppose $\mu_\beta$ and $C_\beta$ have been defined according to $(1)$ and $(2)$ for all $\beta < \alpha$. Let $\tilde{C}_\alpha$ be an infinite pseudo-intersection of the countable collection $\set{C_\beta}:{\beta < \alpha}$. We may assume that $\tilde{C}_\alpha \subset A$ and let $\mu_\alpha = \min \p{K_{\tilde{C}_\alpha} \setminus \set{\mu_\beta}:{\beta < \alpha} }$. Then $C_\alpha = \tilde{C}_\alpha \cap N_{\mu_\alpha}$ is as required.

Once the recursion is completed, we can move to the subgraph on $(A,\set{\mu_\alpha}:{\alpha < \w_1})$ with neighbourhoods $N(\mu_\alpha)$ given by $C_\alpha$. By property $(2)$, the claim applies and we obtain an AD-$(\aleph_0,\aleph_1)$-subgraph.

Thus, we can assume that every infinite subset of $\N$, and in particular every $N_\alpha$ contains an infinite subset $C_\alpha$ such that $K_{C_\alpha}$ is countable. Recursively, pick an increasing transfinite subsequence $\set{\nu_\alpha}:{\alpha<\w_1}$ of $\w_1$, defined recursively by $\nu_0=0$ and 

$$\nu_\alpha = \sup \p{\set{\nu_\beta}:{\beta < \alpha} \cup \bigcup_{\beta < \alpha} K_{C_{\nu_\beta}} }+ 1 < \w_1.$$ 
We claim that $\set{C_{\nu_\alpha}}:{\alpha < \w_1}$ gives rise to an AD-$(\aleph_0,\aleph_1)$-subgraph of $G$. It is a subgraph, since by construction, we have $C_{\nu_\alpha} \subset N(\nu_\alpha)$. And it is almost disjoint, since given two arbitrary neighbourhoods $C_{\nu_\alpha}$ and $C_{\nu_\beta}$ with say $\nu_\alpha < \nu_\beta$, we have $C_{\nu_\alpha} \cap C_{\nu_\beta} \subset C_{\nu_\alpha} \cap N_{\nu_\beta},$ which is finite since $\nu_\beta \notin K_{\nu_\alpha}$ by construction.
\end{proof}

For completeness, we provide the proof of Theorem~\ref{refinement}.

\begin{proof}[Proof of Theorem~\ref{refinement}]
Let $\mathcal{F}=\set{F_\alpha}:{\alpha < \kappa}$ be a $\kappa< \cont$ sized family of infinite subsets of $\N$. We want to find an almost disjoint family $\mathcal{B}=\set{B_\alpha}:{\alpha < \kappa}$ such that $B_\alpha \subset F_\alpha$ for all $\alpha < \kappa$. 

Step 1: Split each $F_\alpha$ into an almost disjoint family $\mathcal{S}_\alpha=\set{S^\alpha_\xi}:{\xi < \kappa^+}$, i.e.\ all $S^\alpha_\xi$ are infinite subsets of $F_\alpha$, and $S^\alpha_\xi \cap S^\alpha_\zeta$ is finite whenever $\xi \neq \zeta < \kappa^+$. As $\kappa^+ \leq \cont$, this is always possible. Note that $\kappa^+$ is a regular cardinal.

Step 2: From our definition of `covering' after Theorem~\ref{ADsubgraph}, it follows that a $\kappa^+$-sized AD-family $S_\alpha$ covers $\singleton{F_\beta}$ iff $\set{S^\alpha_\xi \cap F_\beta}:{\cardinality{S^\alpha_\xi \cap F_\beta}=\infty}$ is a $\kappa^+$-sized AD-family on $F_\beta$. For all $\alpha < \kappa$ we use
\[Y_\alpha =\set{\beta < \kappa}:{\mathcal{S}_\alpha \textnormal{ covers } \Set{F_\beta}}\] to build a partition of $\kappa$ into (possibly empty) sets $\set{X_\alpha}:{\alpha<\kappa}$, defined by $X_0 = Y_0$ and $X_\alpha = Y_\alpha \setminus \bigcup_{\beta < \alpha} Y_\beta$.

Step 3: 
For all $\alpha \notin Y_\beta$ there is $\kappa(\alpha,\beta) < \kappa^+$ such that $\cardinality{F_\alpha \cap S^\beta_\xi} < \infty$ for all $\xi \geq \kappa(\alpha,\beta)$.
Define
\[\eta = \sup \set{\kappa(\alpha,\beta)}:{\beta < \kappa, \alpha \notin Y_\beta} < \kappa^+.\]

Step 4: Here, we pick the almost disjoint refinement. For all $\beta$ there is $\alpha(\beta)$ such that $\beta \in X_{\alpha(\beta)}$. For all $\beta \in X_\alpha$ we choose different $\xi(\beta) > \eta$ and define $B_\beta = S^{\alpha(\beta)}_{\xi(\beta)} \cap F_\beta$. Since the $X_\alpha$ form a partition of $\kappa$, this is a well-defined assignment. Now consider $\beta < \gamma$. We need to show that $B_\beta \cap B_\gamma$ is finite. 
\begin{itemize}
\item If $\alpha(\beta) = \alpha = \alpha(\gamma)$ then $B_\beta \cap B_\gamma \subset S^{\alpha}_{\xi(\beta)} \cap S^{\alpha}_{\xi(\gamma)}$ which is finite, since both sets are elements of the same AD-family $\script{S}_\alpha$.
\item Otherwise, if say $\alpha(\beta) < \alpha(\gamma)$, then $\gamma \notin Y_{\alpha(\beta)}$, so $B_\beta \cap B_\gamma  \subset S^{\alpha(\beta)}_{\xi(\beta)} \cap F_{\gamma}$ is finite since  $\xi(\beta) > \eta \geq \kappa(\gamma,\alpha(\beta))$. \qedhere
\end{itemize}
\end{proof}

\section{The situation under Martin's Axiom}
\label{directproof}

In this section we prove that under MA+$\neg$CH, any binary tree with tops serves as a one-element obstruction set for the class of $(\aleph_0,\aleph_1)$-graphs. For background on Martin's Axiom, see \cite[III.3]{Kunen}. We begin with a sequence of lemmas.

\begin{mylem}
\label{main}
Under MA+$\neg$CH, every $(\aleph_0,\aleph_1)$-graph contains a spanning subgraph isomorphic to a binary tree with tops.
\end{mylem}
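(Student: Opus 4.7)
The plan is first to reduce to the almost disjoint setting via Corollary~\ref{cor_refinement}: under $\neg$CH, every $(\aleph_0,\aleph_1)$-graph has a spanning AD-$(\aleph_0,\aleph_1)$-subgraph, so we may assume outright that $G$ itself is an AD-$(\aleph_0,\aleph_1)$-graph with bipartition $(\N,B)$, $B=\set{b_\alpha}:{\alpha<\w_1}$, and almost disjoint neighbourhoods $N_\alpha:=N_G(b_\alpha)$. The problem then reduces to the combinatorial task of producing (i) a bijection $\phi\colon\N\to 2^{<\w}$, (ii) an injection $\alpha\mapsto\beta_\alpha$ from $\w_1$ into $\script{B}(2^{<\w})$, and (iii) an infinite set $M_\alpha\subset N_\alpha$ with $\phi(M_\alpha)\subset\beta_\alpha$ for each $\alpha<\w_1$. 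Keeping at each $b_\alpha$ only the edges to $M_\alpha$ then produces a spanning subgraph of $G$ isomorphic to a binary tree with tops.

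I would extract the triple $(\phi,\sequence{\beta_\alpha},\sequence{M_\alpha})$ from a sufficiently generic filter $\script{G}$ for a ccc forcing poset $\mathbb{P}$, to which MA$+\neg$CH is then applied. A condition $p\in\mathbb{P}$ would be a triple $(f_p,g_p,P_p)$, where $f_p\colon D_p\to 2^{<\w}$ is a finite partial injection whose image is a downward-closed finite subtree of $2^{<\w}$; $g_p\colon X_p\to 2^{<\w}$ is a function on a finite $X_p\subset\w_1$ taking pairwise $\preceq$-incomparable values; and $P_p(\alpha)\subset N_\alpha$ is a finite promise set for each $\alpha\in X_p$. These pieces are linked by the \emph{chain condition}: $\singleton{g_p(\alpha)}\cup\set{f_p(n)}:{n\in P_p(\alpha)\cap D_p}$ is linearly ordered by $\preceq$ for each $\alpha\in X_p$. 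Extension $q\leq p$ would demand componentwise growth together with the proviso that $g_q(\alpha)$ remain comparable with every $f_p(n)$ for $n\in P_p(\alpha)\cap D_p$. From a sufficiently generic filter the unions $\phi=\bigcup_p f_p$, $\beta_\alpha=\bigcup_p g_p(\alpha)$, and $M_\alpha=\bigcup_p P_p(\alpha)$ furnish the desired data: the chain and extension rules jointly force $\phi(M_\alpha)\subset\beta_\alpha$, while pairwise $\preceq$-incomparability of the $g_p(\alpha)$'s yields $\beta_\alpha\neq\beta_\gamma$ whenever $\alpha\neq\gamma$.

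The $\aleph_1$-many dense sets to be met would be $\set{p}:{n\in D_p}$, $\set{p}:{\sigma\in f_p(D_p)}$, $\set{p}:{\alpha\in X_p}$, $\set{p}:{\cardinality{g_p(\alpha)}\geq k}$ and $\set{p}:{\cardinality{P_p(\alpha)}\geq k}$, taken over all $n\in\N$, $\sigma\in 2^{<\w}$, $\alpha<\w_1$ and $k\in\N$. Each admits a routine density verification driven by the AD hypothesis: to grow $P_p(\alpha)$ by one element, I would pick $n\in N_\alpha\setminus(D_p\cup\bigcup_{\gamma\in X_p\setminus\singleton{\alpha}}N_\gamma)$, which is cofinite in $N_\alpha$, and place $n$ at a fresh node on top of the current $\alpha$-chain; to lengthen $g_p(\alpha)$, I would walk along the existing chain to its maximum and then diverge arbitrarily, preserving $\preceq$-incomparability with the finitely many other $g_p(\gamma)$'s.

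The main technical obstacle would be verifying ccc. Given an uncountable family $\set{p_\xi}:{\xi<\w_1}\subset\mathbb{P}$, I would apply the $\Delta$-system lemma and repeated pigeonhole on the countable space of finite configurations in $2^{<\w}$ to refine to an uncountable subfamily in which $D_{p_\xi}$ and $X_{p_\xi}$ form $\Delta$-systems with roots $D^*,X^*$, the restrictions $f_{p_\xi}\restriction D^*$, $g_{p_\xi}\restriction X^*$ and $P_{p_\xi}\restriction X^*$ coincide, and the complete isomorphism type of each private part is constant. Two such refined conditions $p_\xi,p_\eta$ would then admit a common lower bound by extending each matched private $g$-value from the top $\tau$ of its chain into $\tau\concat 0$ for $p_\xi$'s copy and $\tau\concat 1$ for $p_\eta$'s copy, which renders all private $g$-values pairwise incomparable while preserving the chain conditions. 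Any shared promise-set element sits below a common ancestor of the two chain tops and so remains comparable with both divergent extensions; almost disjointness of the $N_\alpha$'s bounds such shared elements to a finite set and is what ultimately makes the amalgamation safe.
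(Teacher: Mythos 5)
Your overall strategy is sound and, modulo packaging, it is essentially the paper's: a ccc poset of finite approximations to the binary tree together with finite branch assignments, a $\Delta$-system plus pigeonhole argument for ccc, and $\aleph_1$ many dense sets handled by MA. (The paper does not need your preliminary reduction to the almost disjoint case --- it runs the forcing on an arbitrary $(\aleph_0,\aleph_1)$-graph, absorbing $n$ new neighbours of a given $b$ by adding $n$ new levels to the finite tree --- but your reduction via Corollary~\ref{cor_refinement} is legitimate under $\neg$CH and does no harm.)

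There is, however, a genuine defect in your poset: you allow promise elements outside the domain of $f_p$, i.e.\ $P_p(\alpha)\not\subseteq D_p$, and this breaks the density of $\{p : n\in D_p\}$. Concretely, take a condition $q$ with $X_q=\{\alpha,\gamma\}$, $g_q(\alpha)=\langle 0\rangle$, $g_q(\gamma)=\langle 1\rangle$, $D_q=\{n_0\}$ with $f_q(n_0)=\langle\,\rangle$, and $P_q(\alpha)=P_q(\gamma)=\{n\}$ for some $n\in N_\alpha\cap N_\gamma$ with $n\neq n_0$; the neighbourhoods are almost disjoint, not disjoint, so such $n$ may exist, and all your requirements on a condition are met since $P_q(\alpha)\cap D_q=\emptyset$. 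No extension of $q$ can place $n$ in the domain: $f(n)$ would have to be comparable both with $g(\alpha)\succeq\langle 0\rangle$ and with $g(\gamma)\succeq\langle 1\rangle$, which forces $f(n)=\langle\,\rangle$, already taken by $n_0$. The same design choice undermines your ccc amalgamation: an element of $P_{p_\xi}(\alpha)$ for a \emph{private} $\alpha$ may lie in $D_{p_\eta}\setminus D^*$, where $p_\eta$ has placed it at a node having nothing to do with $\alpha$'s chain; your claim that such shared elements ``sit below a common ancestor of the two chain tops'' has no justification, and almost disjointness of the $N_\alpha$'s says nothing about the interaction between $P_{p_\xi}(\alpha)$ and $D_{p_\eta}$. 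Both problems vanish if you require $P_p(\alpha)\subseteq D_p$ in the definition of a condition, so that every promise is already realised inside the finite tree --- exactly as in the paper's poset, where $h_p(b)$ is a branch of the finite tree $T_p$ itself. With that change the dense sets are met by adding the new vertex and a fresh node in a single step, and after running the $\Delta$-system argument on $D_p\cup X_p$ and fixing the root data, two conditions amalgamate as you describe.
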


\begin{proof}
Let $(A,B)$ be an $(\aleph_0,\aleph_1)$-graph. We want to find an infinite set $T \subset A$ plus a tree order $\prec$ on $T$ such that $\script{T}=(T,\prec)$ is isomorphic to $2^{<\w}$, and an injective map $h \colon B \to \mathcal{B}(\script{T})$ (assigning to each element $b \in B$ a unique branch of $\script{T}$) such that $N(b) \cap h(b)$ is infinite for all $b \in B$. Once we have achieved this, we delete for every $b\in B$ all edges from $b$ to $A\setminus h(b)$ to obtain a binary tree with tops with bipartition $(T,B)$. The remaining vertices in $A \setminus T$ can be easily interweaved with $\script{T}$ as isolated vertices to obtain a spanning such subgraph.

To build this tree $\script{T}$, we consider finite approximations $(T_p,\prec_p)$ to $\script{T}$ (which will be finite initial segments of $\script{T}$), and then use Martin's Axiom to find a consistent way to build the desired full binary tree. Formally, consider the partial order $(\mathbb{P},\leq)$ consisting of tuples $p=(T_p,\prec_p,B_p,h_p)$ such that
\begin{itemize}
\item $T_p \subset A$ finite, and $\prec_p$ a tree-order on $T_p$ such that $(T_p,\prec_p)$ is a  binary tree of some finite height,
\item $B_p\subset B$ finite, and
\item $h_p \colon B_p \to \mathcal{B}((T_p,\prec_p))$ an injective assignment of branches,
\end{itemize}
and $p \leq q$ if
\begin{itemize}
\item $(T_q,\prec_q)$ is an initial subtree of $(T_p,\prec_p)$, 
\item $B_q \subseteq B_p$, and 
\item $h_p$ extends $h_q$ in the sense $h_p(b) \supseteq h_q(b)$ for all $b \in B_q$. 
\end{itemize}

To see that $(\mathbb{P},\leq)$ is ccc
, consider an uncountable collection 
\[\set{p_\alpha=(T_\alpha,\prec_\alpha,B_\alpha,h_\alpha)}:{\alpha < \w_1} \subset \mathbb{P}.\]
By the $\Delta$-System Lemma \cite[III.2.6]{Kunen}, there is a finite root $R \subset B$ and an uncountable $K \subset \w_1$ such that $B_\alpha \cap B_\beta = R$ for all $\alpha \neq \beta \in K$. And since there are only countably many finite subsets of $A$, each with only finitely many possible tree-orders and branch-assignments for $R$, there is an uncountable $K' \subset K$ such that $(T_\alpha,\prec_\alpha) = (T_\beta,\prec_\beta)$ and $h_\alpha \restriction R = h_\beta \restriction R$ for all $\alpha \neq \beta \in K'$. But then for any $\alpha \neq \beta \in K'$, $q=(T_\alpha,\prec_\alpha,B_\alpha \cup B_\beta, h_\alpha \cup h_\beta)$ is a condition below $p_\alpha$ and $p_\beta$ (where we possibly have to increase $T_\alpha$ by one level so a suitable extension of $h_\alpha \cup h_\beta$ can be injective).

Next we claim that for all $b \in B$ and $n\in \w$, the set 
$$D_{b,n}=\set{p \in \mathbb{P}}:{b \in B_p \text{ and } \cardinality{h_p(b) \cap N(b)} \geq n}$$ is dense. To see this, consider any condition $q \in \mathbb{P}$ and suppose $(T_q,\prec_q)$ has height $k$. Choose any subset of $F_b \subset N(b) \setminus T_q$ of size $n$, and extend $T_q$ to a full binary tree $T_p$ of height $k+n$, making sure that $F_b \subset h_p(b)$.

Finally, by Martin's Axiom there is a filter $\mathcal{G}$ meeting each of our $\aleph_1<\cont$ many dense sets in $\mathcal{D}=\set{D_{b,n}}:{b \in B, n \in \w}$. Then
\[\script{T} = (T, \prec) = \p{\bigcup_{p \in \mathcal{G}} T_p, \bigcup_{p \in \mathcal{G}} \prec_p}\] is a countable binary tree, and 
\[h \colon B \to \mathcal{B}(\script{T}), \, b \mapsto \bigcup_{p \in \mathcal{G}} h_p(b)\] is an injective function witnessing that $N(b) \cap h(b)$ is infinite, for our dense sets make sure it has cardinality at least $n$ for all $n\in\N$.
\end{proof}

We remark that it has been shown in either of \cite[Thm.~6]{stepransMA}, \cite[2.3]{OCA} or \cite[4.4]{Luzin} (in historical order) that under MA+$\neg$CH, every almost disjoint family of size $<\continuum$ contains a hidden tree family, which together with our Theorem~\ref{ADsubgraph} and the observations in Section~\ref{sec2} implies the result of Lemma~\ref{main}. 

However, we will now strengthen the claim of Lemma~\ref{main} to hold for \emph{full binary trees with tops}. Clearly, binary trees with tops have fewer edges, and are therefore easier to find as subgraphs than full binary trees with tops. But under Martin's Axiom, it turns out that the additional leeway is not needed. Note though that in the previous theorem, we could find a \emph{spanning} binary tree with tops. In the next theorem, we can obtain full binary trees with tops as subgraphs, but can no longer guarantee that they are spanning.

\begin{mylem}
\label{main2}
Under MA+$\neg$CH, every $(\aleph_0,\aleph_1)$-graph contains a full binary tree with tops as a subgraph.
\end{mylem}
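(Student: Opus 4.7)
The plan is to strengthen the forcing argument of Lemma~\ref{main} by making $h_p(b) \subseteq N(b)$ a structural property of conditions, and to exploit the almost-disjoint reduction to control interactions between branches. First, apply Theorem~\ref{ADsubgraph} to pass to an AD-$(\aleph_0,\aleph_1)$-subgraph, so that $N(b) \cap N(b')$ is finite for distinct $b, b' \in B$. Then apply Lemma~\ref{main} to obtain a spanning binary tree with tops: a tree order $(A, \prec) \cong 2^{<\omega}$ and an injective $h \colon B \to \mathcal{B}(A, \prec)$ so that $M_b := N(b) \cap h(b)$ is infinite (hence cofinal in the branch $h(b)$) for each $b$. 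The goal reduces to producing a subtree $T^* \subseteq A$ with $T^* \cong 2^{<\omega}$ together with an injective branch assignment $\phi \colon B^* \to \mathcal{B}(T^*)$ on an uncountable $B^* \subseteq B$ satisfying $\phi(b) \subseteq M_b$, since the induced substructure on $T^* \cup B^*$ is then a full binary tree with tops.

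To build $T^*$, $\phi$, and $B^*$ simultaneously, set up a ccc forcing $\mathbb{P}$ whose conditions are finite tree embeddings $\varphi \colon U \to A$ (order-preserving, with $U \subseteq 2^{<\omega}$ a finite initial subtree) together with a finite injective branch-assignment $\psi \colon B_p \to $ paths in $U$, subject to the strong constraint $\varphi(\psi(b)) \subseteq M_b$ for every $b \in B_p$. Order by extension. The ccc argument follows Lemma~\ref{main}'s $\Delta$-system template, with the extra observation that when merging two compatible conditions, the new tree nodes may be placed inside the appropriate $M_b$'s for each branch endpoint; the AD hypothesis ensures the choices for distinct $b$'s can be made disjointly, since $N(b) \cap N(b')$ is finite. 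Density of $D_{b,n} = \{p : b \in B_p,\ |\psi_p(b)| \ge n\}$ for existing $b \in B_q$ is immediate: one extends $\psi_q(b)$ by a new node $s \in U$ and chooses $\varphi(s)$ in $M_b$ above the current endpoint in $A$, which is possible because $M_b$ is cofinal in $h(b)$. Density of the tree-filling sets $E_s = \{p : s \in U_p \text{ has two children in } U_p\}$ is likewise handled by placing the two new children in $A$-descendants of $\varphi(s)$ chosen to preserve compatibility with the few $b$'s currently using $s$ on their branch.

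The main obstacle, and the step requiring care, is density for adding a new top $b \notin B_q$: one needs to find a branch of (an extension of) $U_q$ whose $\varphi$-image lies entirely in $M_b$. Here the plan is two-pronged. First, one allows $\psi_p(b)$ to begin at an arbitrary node of $U_p$ rather than at its root, so that the demand $\varphi(\psi_p(b)) \subseteq M_b$ is satisfiable by matching only the tail of a chain to $M_b$'s cofinal structure in $h(b)$; AD again prevents different $b$'s from forcing incompatible choices at shared endpoints. Second, after obtaining a generic filter meeting the $\aleph_1 < \mathfrak{c}$ many dense sets $\{D_{b,n}\} \cup \{E_s\}$ via Martin's Axiom, one applies a pigeonhole argument: the countable tree $U = \bigcup U_p$ contains a single vertex $v^*$ that is the starting node of $\psi(b)$ for uncountably many $b$. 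Passing to the subtree of $T^* = \varphi(U)$ rooted at $\varphi(v^*)$, which is itself isomorphic to $2^{<\omega}$, gives the full binary tree with tops as a subgraph of $G$, with $B^*$ the uncountable collection of tops starting at $v^*$.
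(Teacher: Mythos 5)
Your reduction and overall architecture are reasonable, but there is a genuine gap at exactly the point you flag as ``the step requiring care'': density of $D_{b,n}$ for a new top $b \notin B_q$, and the fix you propose does not close it. After passing to the spanning binary tree with tops $(A,\prec)$, every node of the chain $\psi_p(b)$ must be mapped by $\varphi$ into $M_b \subseteq h(b)$; since $\varphi$ is order-preserving and a branch of $(A,\prec)\cong 2^{<\omega}$ is downward closed, every $U_p$-predecessor of the starting node of $\psi_p(b)$ is then automatically mapped into $h(b)$ as well. So allowing the chain to start at an arbitrary node buys nothing: the chain for $b$ can only be grafted onto the unique finite path of $U_q$ whose $\varphi$-image lies on $h(b)$. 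Now take a condition $q$ in which the top node $w$ of that path already has two children in $U_q$, both with images off $h(b)$; such conditions lie below every condition, and a fixed such $q$ blocks uncountably many $b$ simultaneously. No extension of $q$ can place even a two-element chain for such a $b$ inside $M_b$: any new node above $w$ sits above one of $w$'s two committed children and hence maps off $h(b)$, while nodes not above $w$ map off $h(b)$ from the outset. Hence $D_{b,2}$ is not dense, Martin's Axiom gives you no filter, and the concluding pigeonhole on starting nodes is applied to a generic object that need not exist. Note also that almost-disjointness of the $N(b)$'s is irrelevant to this obstruction, which concerns a single $b$ against the already-committed tree rather than interactions between distinct tops.

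The missing idea --- and the device the paper actually uses --- is to force not with a single copy of this poset but with the finite support product $\prod^{\mathrm{fin}}_{n<\omega}\mathbb{P}$, which is still ccc. The dense set attached to $b$ only demands that $b$ be mentioned in \emph{some} coordinate, and this is met by placing $b$ in a fresh coordinate whose tree is still empty, so the commitment problem never arises. The generic filter then yields countably many full binary trees with tops whose top-sets cover $B$, and at least one of them is uncountable. (Your preliminary appeals to Theorem~\ref{ADsubgraph} and Lemma~\ref{main} are harmless but not needed on this route.)
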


\begin{proof}
Let $(A,B)$ be an $(\aleph_0,\aleph_1)$-graph. We want to find an infinite set $T \subset A$ plus a tree order $\prec$ on $T$ such that $\script{T}=(T,\prec)$ is isomorphic to $2^{<\w}$, and an uncountable $B_T \subset B$ plus an injective map $h \colon B_T \to \mathcal{B}(\script{T})$ (assigning to each element $b \in B_T$ a unique branch of $\script{T}$) such that $h(b) \subseteq N(b)$ for all $b \in B_T$. Once we have achieved this, we delete for every $b\in B_T$ all edges from $b$ to $T\setminus h(b)$ to obtain the desired full binary tree $(T,B_T)$ with tops.

To find this tree $\script{T}$, we build countably many such trees in parallel, which together take care of all $b \in B$. Consider the partial order $(\mathbb{P},\leq)$ consisting of tuples $p=(T_p,\prec_p,B_p,h_p)$ such that
\begin{itemize}
\item $T_p \subset A$ finite, and $\prec_p$ a tree-order on $T_p$ such that $(T_p,\prec_p)$ is a  binary tree of some finite height,
\item $B_p\subset B$ finite,
\item $h_p \colon B_p \to \mathcal{B}((T_p,\prec_p))$ an injective assignment of branches, and
\item $h_p(b) \subset N(b)$ for all $b \in B_p$ 
\end{itemize}
and $p \leq q$ if
\begin{itemize}
\item $(T_q,\prec_q)$ is an initial subtree of $(T_p,\prec_p)$, 
\item $B_q \subseteq B_p$, and 
\item $h_p$ extends $h_q$ in the sense $h_p(b) \supseteq h_q(b)$ for all $b \in B_q$. 
\end{itemize}
As in the proof of Lemma~\ref{main}, this partial order is ccc, and hence so is the finite support product 
\[\prod^{\text{fin}}_{n < \w} \mathbb{P} := \set{\vec{p} \in \mathbb{P}^\w}:{\cardinality{\set{n}:{\vec{p}_n \neq \mathbbm{1}}}<\infty}\]
by \cite[III.3.43]{Kunen}.

We claim that for all $b \in B$, the set $D_b=\set{\vec{p}}:{\exists n \in \w \text{ s.t. } b \in B_{\vec{p}_n} }$ is dense in $\prod^{\text{fin}}_{n < \w} \mathbb{P}$. And indeed, to any condition $\vec{p}$ which does not yet mention $b$ we can simply add $b$ to a free coordinate, even using the empty tree.

So by Martin's Axiom, there is a filter $\mathcal{G}$ meeting every one of our $\aleph_1 < \cont$ many dense sets in $\mathcal{D}=\set{D_b}:{b \in B}$. It follows that \[\set{(T_n,B_n)=\p{\bigcup_{\vec{p} \in \mathcal{G}} T_{\vec{p}_n}, \bigcup_{\vec{p} \in \mathcal{G}} B_{\vec{p}_n}}}:{n \in \N}\] is a countable collection of binary trees with tops, such that $B = \bigcup_{n \in \N} B_n$. Thus, at least one of them, say $B_n$, is uncountable. It follows that in $(T,B_T)=(T_n,B_n)$ we have found our full binary tree with tops embedded as a subgraph as desired. 
\end{proof}

We now proceed to showing that under MA, any two binary trees with tops embed into each other. 
Consider the binary tree $T=2^{<\w}$. A subset $B \subset \script{B}(T)$ of branches is called dense (or $\aleph_1$-dense) if for every $t \in T$ the set $B(t) = \set{b \in B}:{t \in b}$ has size at least $\aleph_0$ (or $\aleph_1$ respectively).

It is well known that the Cantor set $2^\w$ is countable dense homogeneous, i.e.\ for every two countable dense subsets $A,B \subset 2^\w$ there is a self-homeomorphism $f$ of $2^\w$ such that $f(A) = B$. It is also known that under MA+$\neg$CH, this assertion can be strengthened to $\aleph_1$-dense subsets of $2^\w$, see for example \cite[3.2]{aleph1dense} and \cite{steprans}. In the following, we shall see that a mild refinement of this approach, namely adding condition \ref{itemSEP} to the partial order below, also works for $(\aleph_0,\aleph_1)$-graphs of binary type. In this condition (d) below, a level $\script{T}(\alpha)$ of a tree $\script{T}$ is said to \emph{separate} a collection of branches $B \subset \script{B}(\script{T})$ if $B(t) = \set{b \in B}:{t \in b}$ has size at most one for all $t \in \script{T}(\alpha)$.

\begin{mylem}
\label{lem_aleph1dense}
Under MA+$\neg$CH, any two full $\aleph_1$-dense binary trees with tops are isomorphic.  
\end{mylem}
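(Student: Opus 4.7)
The plan is to imitate the classical Baumgartner/Stepr\={a}ns argument that under MA+$\neg$CH the Cantor space is $\aleph_1$-dense homogeneous, but with the separation clause indicated by the authors inserted into the finite approximations to handle the branch (``tops'') structure. Given two full $\aleph_1$-dense binary trees with tops $(T, B_T)$ and $(S, B_S)$, I would build the desired graph isomorphism generically over a ccc partial order, realising both a tree isomorphism $T\to S$ and a branch bijection $B_T \to B_S$ as unions of compatible finite pieces.

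Concretely, the poset $\mathbb{P}$ should consist of tuples $p = (T_p, S_p, f_p, B_p, g_p)$ where $T_p \subset T$ and $S_p \subset S$ are finite initial subtrees of a common finite height $n_p$, $f_p \colon T_p \to S_p$ is a tree isomorphism, $B_p \subset B_T$ is finite, $g_p \colon B_p \to B_S$ is injective, with the compatibility $f_p(b \cap T_p) = g_p(b) \cap S_p$ for every $b \in B_p$, and -- this is the new ingredient -- the top level $T_p(n_p)$ separating $B_p$ in the sense already introduced. Conditions are ordered by reverse extension of all the above data. Once a sufficiently generic filter $\mathcal{G}$ is found, the desired graph isomorphism is $\Phi = \bigcup_{p \in \mathcal{G}} f_p$ together with $\Psi = \bigcup_{p \in \mathcal{G}} g_p$.

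To verify ccc, I would take uncountably many $p_\alpha$, apply the $\Delta$-system lemma both to the $B_{p_\alpha}$ (with root $R$) and to the images $g_{p_\alpha}(B_{p_\alpha})$ (with root $R'$), and then pigeonhole down to an uncountable sub-collection on which the finite data $(T_{p_\alpha}, S_{p_\alpha}, f_{p_\alpha}, g_{p_\alpha}\!\restriction\! R)$ are all equal, and on which the local ``splitting type'' of $B_{p_\alpha} \setminus R$ above $T_{p_\alpha}$ and of its image above $S_{p_\alpha}$ are also constant. Any two such $p_\alpha, p_\beta$ should then admit an amalgamation: take unions of the branch data and use $\aleph_1$-density of $B_T, B_S$ to extend the two trees by enough further levels to re-establish separation for the enlarged branch set while preserving the tree isomorphism. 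The relevant dense sets $D_b = \{p : b \in B_p\}$ for $b \in B_T$ (and symmetrically for $b' \in B_S$), together with the sets forcing individual vertices of $T$ and $S$ into $T_p$ or $S_p$, total $\aleph_1 < \mathfrak{c}$, so MA produces the desired filter.

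The main obstacle will be the amalgamation step. Two branches $b \in B_{p_\alpha} \setminus R$ and $b' \in B_{p_\beta} \setminus R$ can share a vertex at the top level of the common initial subtree $T_0$, while the height at which $b, b'$ split in $T$ need not match the height at which $g(b), g(b')$ split in $S$; a naive extension of $f_0$ then fails injectivity on some level. Condition (d) is precisely what buys us control here: by enforcing separation within each individual condition, at most one $b$ from $B_{p_\alpha} \setminus R$ and one $b'$ from $B_{p_\beta} \setminus R$ can sit above any top-level vertex of $T_0$, and the refined $\Delta$-system/pigeonhole step pins down the relevant split heights across the sub-collection. The remaining flexibility to consistently extend $f_0$ to a tree isomorphism realising the union is then provided by the $\aleph_1$-density of both $B_T$ and $B_S$. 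Carrying out the detailed bookkeeping for the splitting type, and verifying that separation is recovered after extension, is the most delicate part of the argument.
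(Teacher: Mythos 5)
Your overall architecture matches the paper's: a poset of finite approximations consisting of a finite tree isomorphism together with a finite injection between the sets of tops, tied together by a compatibility clause and the separation condition, followed by an application of MA to $\aleph_1$ many dense sets. But there is a concrete gap that makes the poset you describe fail to be ccc outright: you have omitted the partitioning device. The paper first splits $A$ and $B$ into $\aleph_1$ many pairwise disjoint \emph{countable} dense pieces $\set{A_\alpha}:{\alpha<\w_1}$ and $\set{B_\alpha}:{\alpha<\w_1}$ and requires every condition to map points of $A_\alpha$ into $B_\alpha$ (condition \ref{itemCCC}). Without this, fix a single branch $b\in B_T$ and uncountably many distinct $s_\alpha\in B_S$: the conditions with $B_p=\singleton{b}$, $g_p(b)=s_\alpha$ and trivial tree part are pairwise incompatible, so your $\mathbb{P}$ contains an uncountable antichain. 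Correspondingly, your pigeonhole step ``reduce to a subfamily on which $g_{p_\alpha}\restriction R$ is constant'' is not a pigeonhole: $g_{p_\alpha}\restriction R$ ranges over an uncountable set and may take each value only once. The paper's condition \ref{itemCCC} is precisely what repairs this --- after the $\Delta$-system is applied to the index sets of the partition pieces meeting the domains, the root part of each condition has its image inside a fixed countable union of pieces, so reducing to a single value is legitimate. This is the standard Baumgartner/Baldwin--Beaudoin/Steprans--Watson device and cannot be dispensed with.

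A second point: you rightly single out the amalgamation of two conditions whose branches pass through a common top-level node of the shared initial tree as the delicate step, since a level-preserving tree isomorphism must preserve heights of meets, so such a cross-pair of branches must split in $T$ at the same height as their images split in $S$. But your proposed remedy --- pigeonholing on the ``splitting type'' of each condition --- cannot deliver this: the quantity $\operatorname{ht}(b\wedge b')$ for $b\in B_{p_\alpha}\setminus R$ and $b'\in B_{p_\beta}\setminus R$ is \emph{pairwise} data about two different conditions, and no refinement of the family based on invariants of individual conditions can control it. Nor does $\aleph_1$-density help at that stage, since the branches and their images are already committed by $f_{p_\alpha}\cup f_{p_\beta}$ and $g_{p_\alpha}\cup g_{p_\beta}$. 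So even with the partition condition restored, this step needs an actual argument rather than bookkeeping, and as written your proposal does not supply one.
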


\begin{proof}
Suppose $G=(T_A, A)$ and $H=(T_B,B)$ are two full $\aleph_1$-dense binary trees with tops. For convenience, we treat $a \in A$ as branch of the tree $T_A$. Recall that $a \restriction n$ denotes the unique node of the branch $a$ of height $n$.

It is clear that $A$ and $B$ can be partitioned into $\aleph_1$ many disjoint countable dense sets $\set{A_\alpha}:{\alpha < \w_1}$ and $\set{B_\alpha}:{\alpha < \w_1}$ respectively. Consider the partial order $(\mathbb{P},\leq)$ consisting of tuples $p=(f_p,g_p)$ such that
\begin{enumerate}[label=(\alph*)]
\item $f_p$ is a finite injection with $\operatorname{dom}(f_p) \subset A$ and $\operatorname{ran}(f_p) \subset B$,
\item\label{itemCCC} if $x \in A_\alpha$ then $f_p(x) \in B_\alpha$,
\item $g_p$ is an order isomorphism between $T_A({\leq}n_p)$ and $T_B({\leq}n_p)$ for some $n_p \in \N$,
\item\label{itemSEP} $T_A(n_p)$ separates $\operatorname{dom}(f_p)$ and $T_B(n_p)$ separates $\operatorname{ran}(f_p)$,
\item\label{itemNHOODS} for all $a \in \operatorname{dom}(f_p)$ we have $g_p (a \restriction n_p) = f_p(a) \restriction n_p$,
\end{enumerate}
and define $p \leq q$ if
\begin{itemize}
\item $f_p \supseteq f_q$, and
\item $g_p \supseteq g_q$.
\end{itemize}

To see that $(\mathbb{P},\leq)$ is ccc, consider an uncountable collection 
$$\set{p_\alpha=(f_\alpha,g_\alpha)}:{\alpha < \w_1} \subset \mathbb{P}.$$
Applying the $\Delta$-System Lemma to all sets of the form $I_\alpha = \set{\gamma}:{A_\gamma \cap \operatorname{dom}(f_\alpha) \neq \emptyset}$ (for $\alpha < \w_1$), we obtain a finite root $R$ and an uncountable $K \subset \w_1$ such that $I_\alpha \cap I_\beta = R$ for all $\alpha \neq \beta \in K$.

Since there are only countably many different finite subsets of $A'=\bigcup_{\alpha \in R} A_\alpha$, we may assume that $\operatorname{dom}(f_\alpha) \cap A' = S =  \operatorname{dom}(f_\beta) \cap A'$ for all $\alpha \neq \beta \in K$. And since \ref{itemCCC} implies that there are only countably many choices for $f_\alpha \restriction S$, we may assume that $f_\alpha \restriction S = f_\beta \restriction S$ for all $\alpha \neq \beta \in K$. Finally, since there are only countably many different $g_\alpha$, we may assume that all $g_\alpha \colon T_A(\leq n) \to T_B(\leq n)$ agree. 

But now any two conditions in $\set{p_\alpha}:{\alpha \in K}$ are compatible. By \ref{itemCCC} and the definition of $R$, the map $f=f_\alpha \cup f_\beta$ is a well-defined injective partial map. Extend $g_\alpha$ to an order isomorphism $g \colon T_A(\leq m) \to T_B(\leq m)$ for some sufficiently large $m \geq n$, making sure that $(d)$ and $(e)$ are satisfied. Then $(f,g)$ is a condition below $f_\alpha$ and $f_\beta$, so $(\mathbb{P},\leq)$ is ccc.   

As our dense sets, we will consider
\begin{enumerate}
\item\label{itemTree} $D_n = \set{p \in \mathbb{P}}:{T_A(\leq n) \subset \operatorname{dom}(g_p)}$, for $n\in \N$,
\item\label{itemDOMAIN}  $D_{a}=\set{p \in \mathbb{P}}:{a \in \operatorname{dom}(f_p)}$ for $a \in A$, and
\item\label{itemRANGE}  $D_{b}=\set{p \in \mathbb{P}}:{b \in \operatorname{ran}(f_p)}$ for $b \in B$.
\end{enumerate}

To see that sets in $(1)$ are dense, consider any condition $q = (f_q,g_q) \in \mathbb{P}$ and assume that $\operatorname{dom}(g_q) = T_A(\leq m)$ for some $m < n$. Since for every $t \in T(m)$ there is at most one $a \in \operatorname{dom}(f_q)$ such that $t \in a$ by \ref{itemSEP}, it is clear that we can extend $g_q$ to a function $g_p$ defined on $T_A(\leq n)$ by mapping the upset $t^\uparrow$ in $T_A(\leq n)$ to the corresponding upset of $g_q(t)^\uparrow$ of $T_B(\leq n)$ such that the branch $a \restriction t^\uparrow$ is mapped to $f_q(a) \restriction g_q(t)^\uparrow$. For $f_p = f_q$ we have $p=(f_p, g_p)$ is a condition in $D_n$ below $q$.

To see that sets in $(2)$ are dense, consider any condition $q \in \mathbb{P}$ and assume that $a \notin \operatorname{dom}(f_q)$. Say $\operatorname{dom}(g_q) = T_A(\leq n)$ for a given $n \in \N$. By $(1)$ we may assume that $T_A(n)$ separates $\operatorname{dom}(f_q) \cup \singleton{a}$. Find $t \in T_A(n)$ such that $t \in a$. Note that $a \in A_\alpha$ for some $\alpha < \w_1$. By density of $B_\alpha$, we may pick $ b \in B_\alpha$ extending $g_q(t)$. Then $f_p = f_q \cup \pair{a,b}$ and $g_p = g_q$ gives a condition in $D_a$ below $q$. The argument for $(3)$ is similar.

Finally, Martin's Axiom gives us a filter $\script{G}$ meeting all specified dense sets. But then (\ref{itemDOMAIN}) and (\ref{itemRANGE}) force that $f = \bigcup_{p \in \script{G}} f_p \colon A \to B$ is a bijection, and (\ref{itemTree}) forces that $g=\bigcup_{p \in \script{G}} g_p \colon T_A \to T_B$ is an isomorphism of trees. In combination with property \ref{itemNHOODS}, we have $g[a] = f(a)$ for all $a \in A$, and this means, since $G$ and $H$ were \emph{full} binary trees with tops, that $f \cup g \colon G \to H$ is an isomorphism of graphs.
\end{proof}

\begin{mythm}
Under MA+$\neg$CH, any binary tree with tops embeds into all other $(\aleph_0,\aleph_1)$-graphs as a subgraph.
\end{mythm}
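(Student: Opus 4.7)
The plan is to combine Lemma~\ref{main2} with Lemma~\ref{lem_aleph1dense}, bridged by a Cantor--Bendixson-style pruning. Fix an arbitrary binary tree with tops $T_0 = (A_0, B_0)$ and an arbitrary $(\aleph_0, \aleph_1)$-graph $G$. First, by Lemma~\ref{main2}, $G$ contains a full binary tree with tops $T_1 = (A_1, B_1)$ as a subgraph; identifying $A_1$ with $2^{<\omega}$, I view $B_1$ as an uncountable subset of $2^\omega$.

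The main step is to prune $T_1$ to a full $\aleph_1$-\emph{dense} binary tree with tops. Let $P \subset 2^\omega$ denote the set of condensation points of $B_1$. By Cantor--Bendixson, $P$ is a non-empty closed perfect set, and $B_1 \setminus P$ is countable, so $B_1^* := B_1 \cap P$ is uncountable. Iteratively choosing splitting nodes of $P$ in $2^{<\omega}$, I obtain a subtree $A_1^* \subset A_1$ isomorphic to $2^{<\omega}$ whose branches, by compactness of $P$, are precisely the elements of $P$; hence each $b \in B_1^*$ sits on a unique branch of $A_1^*$. Moreover, every node $t \in A_1^*$ is a $P$-node, so $[t] \cap B_1$ is uncountable by the condensation property, and since $B_1 \setminus P$ is countable, $[t] \cap B_1^*$ is uncountable as well. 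Thus $T_1^* := (A_1^*, B_1^*)$ is a full $\aleph_1$-dense binary tree with tops realised as a subgraph of $G$.

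Next, I construct a full $\aleph_1$-dense binary tree with tops $T^*$ that contains $T_0$ as a subgraph. Pick any $\aleph_1$-dense set $C \subset 2^\omega$ of branches of $A_0 = 2^{<\omega}$ (e.g.\ put $\aleph_1$ branches through each basic clopen set) and set $B^* := B_0 \cup C$. Declaring every $b \in B^*$ adjacent to every node on its branch yields a full $\aleph_1$-dense binary tree with tops $T^* = (A_0, B^*)$; restricting to the vertex set $A_0 \cup B_0$ and to the original edges of $T_0$ reproduces $T_0$ exactly as a subgraph of $T^*$.

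Lemma~\ref{lem_aleph1dense} then provides an isomorphism $T^* \cong T_1^*$, and chaining $T_0 \hookrightarrow T^* \cong T_1^* \hookrightarrow G$ yields the desired subgraph embedding. The delicate point is the pruning in Step two: one must verify carefully that iterating to $P$-splitting nodes genuinely produces a subtree order-isomorphic to $2^{<\omega}$ whose branch set is identified with $P$, and then transfer condensation of $B_1$ to $\aleph_1$-density of $B_1^*$ under every node of the new tree.
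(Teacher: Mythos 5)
Your proposal is correct and takes essentially the same route as the paper: reduce the host graph to a full binary tree with tops via Lemma~\ref{main2}, trivially fatten the given binary tree with tops to a full $\aleph_1$-dense one, prune the host's tree-with-tops to a full $\aleph_1$-dense one, and conclude with Lemma~\ref{lem_aleph1dense}. The only difference is cosmetic: the paper performs the pruning by discarding the countably many branches that are not complete accumulation points and then deleting non-splitting nodes, whereas you pass to the Cantor--Bendixson perfect kernel of $B_1$ in $2^\omega$ and its splitting nodes --- the same argument in topological language.
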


\begin{proof}
Suppose $G=(T_A, A)$ is a binary tree with tops, and $H$ an arbitrary $(\aleph_0,\aleph_1)$-graph. Our task is to embed $G$ into $H$ as a subgraph. By Lemma~\ref{main2}, we may assume that $H=(T_B,B)$ is a full binary tree with tops. 

Our plan is (a) to extend $G$ to a full $\aleph_1$-dense binary tree with tops $G'$, and (b) to find in $H$ a full $\aleph_1$-dense binary tree with tops $H'$ as a subgraph. Then Lemma~\ref{lem_aleph1dense} implies that
$$G \hookrightarrow G' \cong H' \hookrightarrow H,$$
establishing the theorem.

Only item (b) requires proof. For this, we observe that every uncountable set of branches $X$ of a binary tree $T$ contains at least one \emph{complete accumulation point}, i.e.\ a branch $x \in X$ such that for every $t \in x$, the set $B(t) = \set{y \in X}:{t \in y}$ is uncountable. Indeed, otherwise for every $x \in X$ there is $t_x$ such that $B(t_x)$ is countable, and hence $X \subset \bigcup_{t_x \in T} B(t_x)$ is countable, a contradiction. 

It follows that in fact all but at most countably many points of $X$ are complete accumulation points, so without loss of generality, we may assume that every point of $B$ is a complete accumulation point. Consider $T'_B = \bigcup_{b \in B} b \subset T_B$. Then $T'_B$ is a (subdivided) binary tree, so after deleting all non-splitting nodes from $T'_B$, we obtain a full $\aleph_1$-dense binary tree with tops $H'$ as desired. The proof is complete. 
\end{proof}

\section{\texorpdfstring{A third type of $(\aleph_0,\aleph_1)$-graph}%
                        {A third type of A0A1-graph}}

In this section we present a counterexample to the main open question from \cite[\S 8]{NST}, which is our Question~\ref{athirdclass} from the beginning. 

\begin{mythm}
\label{thm_cool}
	Under CH, there is an almost disjoint $(\aleph_0,\aleph_1)$-graph which contains no $(\aleph_0,\aleph_1)$-minor that is indivisible or of binary type.
\end{mythm}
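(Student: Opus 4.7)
The plan is to construct under CH, by a transfinite recursion of length $\omega_1$, an almost disjoint family $\script{A} = \langle N_\alpha : \alpha<\omega_1\rangle$ on $\N$ whose associated AD-$(\aleph_0,\aleph_1)$-graph $G$ is the desired counterexample. I split the two forbidden behaviours into two combinatorial targets. To kill indivisible $(\aleph_0,\aleph_1)$-minors, I aim to make $\script{A}$ anti-Luzin. Combined with Theorem~\ref{ADsubgraph} (which lets us pass from any $(\aleph_0,\aleph_1)$-minor to an AD-$(\aleph_0,\aleph_1)$-subgraph of it) and a short preparatory lemma showing that the AD-$(\aleph_0,\aleph_1)$-minors of $G$ arise from refinements and coarsenings of subfamilies of $\script{A}$, anti-Luzinity propagates to every such minor and in particular rules out indivisibility. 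To kill $T_2^{tops}$ minors, I use the diagram in Section~\ref{sec2}: any $T_2^{tops}$ minor of $G$ delivers, after passing to an AD-subgraph, a hidden weak tree family inside $\script{A}$ (once reindexed along a minor scheme), so it suffices to diagonalize against every such blueprint.

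Concretely, CH lets me list as $\langle (\script{T}_\gamma, \varphi_\gamma, R_\gamma) : \gamma<\omega_1\rangle$ all triples in which $\script{T}_\gamma$ is a tree order of countable height on some infinite subset of $\N$, $\varphi_\gamma$ is a candidate injective assignment of $\omega_1$ top-labels into branches of $\script{T}_\gamma$, and $R_\gamma$ encodes a candidate minor scheme (equivalence relations on the $\N$-side and partitions of the tops into countable batches). At stage $\alpha<\omega_1$, I choose $N_\alpha$ almost disjoint from $\{N_\beta : \beta<\alpha\}$ so as to (a) spoil the $\alpha$-th candidate, e.g.\ by ensuring that the $R_\alpha$-image of $N_\alpha$ is not almost contained in any branch that $\varphi_\alpha$ is permitted to assign to it, and (b) respect all anti-Luzin commitments accrued at earlier stages, by forcing $N_\alpha$ to lie inside a previously reserved infinite set $A_\alpha \subset \N$ earmarked as one half of an intended splitting.

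The hard part will be ensuring that (a) and (b) can be satisfied together at every stage and that the scheme closes up correctly over all $\omega_1$ steps. Since $\script{T}_\alpha$ has only countably many branches and $A_\alpha$ is infinite, a standard diagonalization inside $A_\alpha$ produces a candidate $N_\alpha$ avoiding all forbidden branches mod finite, while almost-disjointness with earlier $N_\beta$'s imposes only finitely many further restrictions at each stage. This refines the Roitman--Soukup priority argument from \cite{Luzin}; the technical bulk of the proof is bookkeeping, namely verifying that the accumulating anti-Luzin commitments remain mutually consistent throughout the recursion and that the enumeration of blueprints genuinely covers every potential AD-$(\aleph_0,\aleph_1)$-minor of $\script{A}$.
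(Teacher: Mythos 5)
Your overall flavour (a CH diagonalisation against an enumeration of countable trees and minor schemes, refining Roitman--Soukup) matches the paper, but both halves of your plan have gaps that trace back to the same issue: the forbidden structures are \emph{minors}, and neither of your reductions survives the passage to a quotient. For the binary-type half, you propose to enumerate triples containing an injective assignment $\varphi_\gamma$ of $\omega_1$ top-labels to branches of $\script{T}_\gamma$; there are $2^{\aleph_1}$ such assignments, so they cannot be listed in $\omega_1$ stages, and even for a fixed assignment, spoiling at stage $\alpha$ the one branch earmarked for $N_\alpha$ ruins only a single top, leaving an $(\aleph_0,\aleph_1)$-sized remainder that still witnesses a $T_2^{tops}$. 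The paper avoids enumerating assignments altogether: it enumerates only the trees $\script{T}_\alpha$, whose underlying sets are countable families of disjoint subsets of $\N$ (exactly the possible systems of branch sets on the countable side of a minor --- this is how the minor schemes enter), and for each such tree it reserves one countable set $B_\alpha$ of branches and arranges that \emph{every} neighbourhood whose image in $\script{T}_\alpha$ meets some branch infinitely is forced, mod finite, into a branch from $B_\alpha$; an injective assignment of uncountably many tops into the countable $B_\alpha$ is then impossible by counting. Your fallback via the Section~\ref{sec2} diagram also runs an implication backwards: the diagram gives ``hidden weak tree family $\rightarrow$ containing a $T_2^{tops}$ subgraph'', and the paper's second Observation shows the converse fails, so excluding hidden weak tree families does not exclude $T_2^{tops}$ subgraphs, let alone minors.

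For the indivisible half, the ``preparatory lemma'' you defer to is where the real work lies, and as stated it is false: anti-Luzinity does not propagate to minors. The countable side of a minor is a family $\script{X}$ of disjoint subsets of $\N$, and two unions $\bigcup\script{C}$, $\bigcup\script{D}$ with finite intersection in $\N$ can have images in $\script{X}$ with infinite intersection, since a partition class need only meet each union in a single point; so a splitting of $\script{A}$ over $\N$ gives no splitting of the quotient family. The paper instead builds the splitting property into every quotient directly, requiring $C_s(\script{T}_\alpha)\cap C_t(\script{T}_\alpha)=^*\emptyset$ for distinct $s,t$ on each level, quantified over all enumerated partitions, and --- crucially --- it indexes the family by the branches of an Aronszajn tree $\script{T}^*$ rather than by $\omega_1$, so that Lemma~\ref{incomparabletopsAr} locates, inside \emph{any} uncountable set of tops of \emph{any} minor, two incomparable nodes each carrying uncountably many tops; these supply the division. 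A plain $\omega_1$-recursion with accumulated ``anti-Luzin commitments'' has no mechanism for producing such splittings uniformly over all uncountable subfamilies and all quotients simultaneously, which is precisely the point at which your bookkeeping would fail to close.
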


Our proof is inspired by the proof strategy of the following result due to Roitman \& Soukup: \emph{Under CH plus the existence of a Suslin tree, there is an uncountable anti-Luzin AD-family containing no uncountable hidden weak tree families \cite[4.6]{Luzin}.} Note though, that not containing a binary $(\aleph_0,\aleph_1)$-graph as a  minor or just as a subgraph are stronger assertions than not containing an uncountable hidden weak tree family. 

We shall make use of the following lemma.

\begin{mylem}
\label{incomparabletopsAr} Whenever $\script{T}^*$ is Aronszajn, and $B$ an uncountable set of branches of $\script{T}^*$ such that no two elements of $B$ have the same order type, there are incompatible elements $s,t \in \script{T}^*$ both contained in uncountably many branches of $B$.
\end{mylem}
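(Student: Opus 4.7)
The plan is to argue by contradiction: suppose that for every pair of incompatible elements $s,t \in \script{T}^*$, at most one of them lies in uncountably many branches of $B$. Define $S = \set{s \in \script{T}^*}:{s \text{ lies in uncountably many branches of } B}$. Under this assumption any two elements of $S$ are comparable, so $S$ is a chain; since every chain in an Aronszajn tree extends to a branch and all branches are countable, $S$ is countable. Let $\beta = \sup_{s \in S}(\operatorname{ht}(s) + 1)$, which is a countable ordinal by regularity of $\aleph_1$.

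The key step is to show that every branch $b \in B$ of order type $> \beta$ must pass through level $\script{T}^*(\beta)$. Enumerate $b$ in increasing order as $\sequence{b_\xi \colon \xi < \operatorname{ot}(b)}$. The map $\xi \mapsto \operatorname{ht}(b_\xi)$ is strictly increasing, so by a routine induction $\operatorname{ht}(b_\beta) \geq \beta$. If $\operatorname{ht}(b_\beta) = \beta$, we are done; otherwise the well-ordered predecessor set $b_\beta^{\downarrow}$ contains a unique element $t$ at level $\beta$, and since $b$ is a maximal chain and $b \cup b_\beta^{\downarrow}$ is also a chain, $t$ must already lie in $b$. I expect this small argument, where maximality forces an entire predecessor set back into the branch, to be the one step that needs the most care.

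Once this claim is in place, the contradiction follows by pigeonhole. Since the branches of $B$ have pairwise distinct countable order types and $\cardinality{B} = \aleph_1$, only countably many have order type at most $\beta$, so uncountably many branches of $B$ pass through $\script{T}^*(\beta)$. On the other hand, $\script{T}^*(\beta)$ is countable (Aronszajn), and every $t \in \script{T}^*(\beta)$ satisfies $\operatorname{ht}(t) = \beta > \operatorname{ht}(s)$ for all $s \in S$, so $t \notin S$ and therefore $t$ lies in only countably many branches of $B$. Altogether at most $\aleph_0 \cdot \aleph_0 = \aleph_0$ branches of $B$ can meet $\script{T}^*(\beta)$, contradicting the previous count and completing the proof.
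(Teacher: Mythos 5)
Your proof is correct and follows essentially the same route as the paper's: assume the conclusion fails, note that the set $S$ of nodes lying in uncountably many branches of $B$ is a chain and hence countable, and then derive a contradiction by counting the branches of $B$ through a level of $\script{T}^*$ lying above $S$. The paper compresses the step where you check that every branch of order type greater than $\beta$ meets level $\beta$ (this is immediate once one notes that maximal chains are downward closed, so the $\xi$-th element of a branch has height exactly $\xi$), but the argument is the same.
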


\begin{proof}
The proof follows \cite[4.7]{Luzin}. Consider an Aronszajn tree $\script{T}^*$, and let $B$ be an uncountable set of branches of $\script{T}^*$ such that no two elements of $B$ have the same order type. 

Suppose for a contradiction that whenever $s$ and $t$ are incompatible, then either $B(s) = \set{b \in B}:{ s\in b}$ is countable or $B(t) = \set{b \in B}:{t\in b}$ is countable. Then $S = \set{s}:{B(s) \text{ is uncountable}}$ forms a chain, hence is countable. So there is $\alpha < \w_1$ with $\script{T}^*(\alpha) \cap S=\emptyset$. But now all but countably many elements of B are contained in the countable set $\bigcup_{s \in \script{T}^*(\alpha)} B(s),$ a contradiction. 
\end{proof}

\begin{proof}[Proof of Theorem~\ref{thm_cool}]
Consider an Aronszajn tree $\script{T}^*$, and let $B$ be an uncountable set of branches of $\script{T}^*$ such that no two elements of $B$ have the same order type. 

Using CH, let $\set{\script{T}_\alpha = \p{T_\alpha,<_\alpha}}:{\alpha < \w_1}$ enumerate all trees of countable height whose underlying set is an infinite family of non-empty disjoint subsets of $\N$. For a subset $C \subset \N$ we define $C( \script{T}_\alpha) = \set{t \in T_\alpha}:{C \cap t \neq \emptyset}.$

Let us construct, by recursion on $\alpha < \w_1$, 
\begin{itemize}
	\item families $\set{C_t}:{t\in \script{T}^*(\alpha)}$ of infinite subsets of $\N$, and
    \item countable families $B_\alpha$ of branches of $\script{T}_\alpha$,
\end{itemize}
such that
\begin{enumerate}[label=(\alph*)]
        	\item\label{itemAD} for all $s,t \in T^*$ we have $C_t \subset^* C_s$ if $s<t$, and $C_s \cap C_t =^* \emptyset$ if $s$ and $t$ are incomparable,
    		\item\label{itemDIV} for all $s\neq t \in \script{T}^*(\alpha)$, we have $C_s( \script{T}_\alpha) \cap C_t( \script{T}_\alpha) =^* \emptyset$, and
	\item\label{itemBIN} for all $t \in \script{T}^*(\alpha)$, if $C_t( \script{T}_\alpha)$ contains an infinite chain in $\script{T}_\alpha$, then there is $b \in B_\alpha$ such that $C_t( \script{T}_\alpha) \subset^* b$.
\end{enumerate}

For the construction, suppose for some $\alpha < w_1$ that we have already constructed infinite sets $C_t \subset \N$ for all $t \in \script{T}^*$ of height strictly less than $\alpha$. By \ref{itemAD}, we may pick for every $t \in \script{T}^*(\alpha)$ an infinite pseudo-intersection $D_t$ of the family $\set{C_s}:{s<t}$. Using that every level $\script{T}^*(\alpha)$ of our Aronszajn tree $\script{T}^*$ is countable, find an almost disjoint refinement $\set{D'_t}:{t \in \script{T}^*(\alpha)}$ of $\set{D_t}:{t \in \script{T}^*(\alpha)}$. This can be done either by hand, or by invoking Theorem~\ref{refinement}. Similarly, we can find a further refinement $\set{D''_t}:{t \in \script{T}^*(\alpha)}$ such that $D''_s( \script{T}_\alpha) \cap D''_t( \script{T}_\alpha) =^* \emptyset$ for all $s \neq t \in \script{T}^*(\alpha)$. This takes care of property \ref{itemDIV}.

For \ref{itemBIN}, we use the Aronszajn property to enumerate $\script{T}^*(\alpha) = \set{t_n}:{n \in \N}$. For $n \in \N$, if $D''_{t_n}( \script{T}_\alpha)$ has infinite intersection with some branch of $\script{T}_\alpha$, we pick one such branch $b_n$ and pick an infinite subset $C_{t_n} \subset D''_{t_n}$ such that $C_{t_n}( \script{T}_\alpha) \subset b_n$. Otherwise, we simply put $C_{t_n} = D''_{t_n}$ (and let $b_n$ be an arbitrary branch). This final refinement preserves \ref{itemAD} and \ref{itemDIV}, and after putting $B_\alpha = \set{b_n}:{n \in \N}$, we see that also \ref{itemBIN} is satisfied.

Having completed the construction, we may pick, by \ref{itemAD}, for every branch $b \in B$ an infinite pseudo-intersection $N(b)$ along the branch $b$, i.e.\ $N(b) \subseteq^* C_t$ for all $t \in b$. It follows from \ref{itemAD} that $\set{N(b)}:{b \in B}$ is an almost disjoint family of size $\w_1$. 

Let $G$ be the almost disjoint $(\aleph_0,\aleph_1)$-graph with bipartition $(\N,B)$ where the neighbourhood of $b\in B$ is $N(b)$.

\begin{myclaim*}
Property~\ref{itemBIN} implies that no $(\aleph_0,\aleph_1)$-minor of $G$ is of binary type.
\end{myclaim*}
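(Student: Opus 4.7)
The plan is a proof by contradiction via property~\ref{itemBIN}. Suppose $H$ is an $(\aleph_0,\aleph_1)$-minor of $G$ of binary type, with bipartition $(A,B')$, binary tree $\script{S}$ of height $\w$ on $A$, and branches $\gamma_{b'}\in\script{B}(\script{S})$ with $N_H(b')\subseteq\gamma_{b'}$ infinite for every $b'\in B'$.

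The first step is a reduction on branch sets. Since $G$ is bipartite with parts $\N$ and $B$, any connected subgraph of $G$ lying entirely in one part is a singleton. As the $\aleph_1$-many $V_{b'}$'s are pairwise disjoint, at most countably many can meet the countable set $\N$. Discarding these, we obtain an uncountable $B''\subseteq B'$ with $V_{b'}=\Set{\beta(b')}\subseteq B$. Setting $t_a := V_a \cap \N$, each $H$-edge $b'a$ with $b'\in B''$ is realised in $G$ by an edge from $\beta(b')\in B$ to some vertex of $V_a$; this neighbour must lie in $\N$, so $t_a\neq\emptyset$. Hence, writing $A'':=\set{a\in A}:{t_a\neq\emptyset}$, we have $N_H(b')\subseteq A''$ for every $b'\in B''$.

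Next I would pin the combinatorics of $A''$ onto an enumerated $\script{T}_\alpha$ and invoke property~\ref{itemBIN}. Fix $a_0\in A''$ and $m\in t_{a_0}$, and let $\script{T}_\alpha$ be the tree with underlying set $\Set{\Set{m}} \cup \set{t_a}:{a\in A''\setminus\Set{a_0}}$ (disjoint non-empty subsets of $\N$), ordered by the restriction of $<_\script{S}$ with $\Set{m}$ adjoined as root; this is a tree of countable height, hence equal to $\script{T}_\alpha$ for some $\alpha<\w_1$. Since $\script{T}^*(\alpha)$ is countable and $B''$ is uncountable, some $t\in\script{T}^*(\alpha)$ lies on uncountably many of the branches $\beta(b')$, $b'\in B''_t\subseteq B''$. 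For each such $b'$, $N(\beta(b'))\subseteq^* C_t$ gives $N(\beta(b'))(\script{T}_\alpha)\subseteq^* C_t(\script{T}_\alpha)$, and by construction $N(\beta(b'))(\script{T}_\alpha)\supseteq\set{t_a}:{a\in N_H(b')\cap(A''\setminus\Set{a_0})}$, which is an infinite chain in $\script{T}_\alpha$ (being an infinite subset of the chain $\gamma_{b'}$ in $\script{S}$). Hence $C_t(\script{T}_\alpha)$ contains an infinite chain in $\script{T}_\alpha$, and property~\ref{itemBIN} yields a branch $b\in B_\alpha$ with $C_t(\script{T}_\alpha)\subseteq^* b$.

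Finally I would derive the contradiction from the rigidity of branches of $\script{S}$. Write $\gamma^\circ_b\subseteq A''\setminus\Set{a_0}$ for the chain satisfying $b=\Set{\Set{m}}\cup\set{t_a}:{a\in\gamma^\circ_b}$; since $b$ almost contains the infinite chain produced above, $\gamma^\circ_b$ is infinite and therefore meets unboundedly many levels of $\script{S}$. For every $b'\in B''_t$, almost all of $N_H(b')\cap(A''\setminus\Set{a_0})$ lies in $\gamma^\circ_b$, so $\gamma^\circ_b\cap\gamma_{b'}$ is infinite; since $\gamma_{b'}$ is a downward-closed branch of $\script{S}$, this forces $\gamma^\circ_b\subseteq\gamma_{b'}$. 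But an infinite chain meeting unboundedly many levels of a binary tree of height $\w$ is contained in a unique branch, so all the $\gamma_{b'}$ with $b'\in B''_t$ coincide, contradicting the injectivity of $b'\mapsto\gamma_{b'}$ on the uncountable $B''_t$. The main obstacle is the initial singleton reduction of the $V_{b'}$'s together with packaging the $\N$-pieces $t_a$ into a tree matching the enumeration $\Set{\script{T}_\alpha}$; once those are in place, property~\ref{itemBIN} does the rest.
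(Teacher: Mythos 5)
Your proof is correct and follows essentially the same route as the paper's: encode the $\N$-traces of the tree-side branch sets as one of the enumerated trees $\script{T}_\alpha$, locate a node of $\script{T}^*(\alpha)$ lying on uncountably many of the relevant tops' branches, and use property~\ref{itemBIN} to force all of their assigned binary-tree branches to coincide, contradicting the injectivity of the top-to-branch assignment. Your explicit treatment of branch sets that miss $\N$ and the adjunction of the root $\Set{m}$ so that the induced order is genuinely a tree of countable height are technical points the paper dispatches with a ``without loss of generality'', but the underlying argument is the same.
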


To see the claim, suppose that $H=(\script{T},X)$ is an $(\aleph_0,\aleph_1)$-minor of $G$ of binary type. Since any non-trivial branch set of the bipartite graph $G$ must contain a vertex from $\N$, we may assume, without loss of generality, that $X \subset B$, and that every branch set $X_t \subset V(G)$ corresponding to a vertex of $t \in \script{T}$ intersects $\N$. Further, there is an injective function $h \colon X \to Br(\script{T})$ mapping points in $X$ to branches of $\script{T}$ such that $N_G(x)(\script{T}) \cap h(x)$ is infinite for all $x \in X$.

However, the tree $\script{T}=\script{T}_\alpha$ appears in our enumeration. Without loss of generality, $X \subset \set{b \in B}:{\operatorname{ht}(b) > \alpha}$. But then \ref{itemBIN} implies that $\operatorname{ran}(h) \subset B_\alpha$, which is countable, contradicting that $X$ is uncountable and $h$ injective. 

\begin{myclaim*}
Property~\ref{itemDIV} implies that every $(\aleph_0,\aleph_1)$-minor of $G$ is divisible.
\end{myclaim*}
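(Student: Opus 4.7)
My plan is to follow the template of the previous claim, with property~\ref{itemDIV} playing the role that property~\ref{itemBIN} did there. Given an $(\aleph_0,\aleph_1)$-minor $H=(\script{T},X)$ of $G$, I first reduce (as in the previous claim) to the setting where $X \subset B$, each branch set $V_t$ with $t \in \script{T}$ meets $\N$, and in addition $V_x = \{x\}$ for $x \in X$. Then $\script{T}$ may be identified with the infinite family $\script{F} := \set{V_t \cap \N}:{t \in \script{T}}$ of pairwise disjoint non-empty subsets of $\N$. I equip $\script{F}$ with some tree order of countable height, so that the resulting tree appears as $\script{T}_\alpha$ for some $\alpha < \w_1$ in the enumeration. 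Since under CH there are $\aleph_1$-many tree orders on a fixed countable underlying family, I am free to choose the order so that its enumeration-index $\alpha$ is as large as needed.

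The crux is to locate a level $\alpha$ of $\script{T}^*$ at which two distinct nodes $s \neq t \in \script{T}^*(\alpha)$ each have uncountably many branches from $X$ passing through them. Setting $X_s := \set{x \in X}:{x \restriction \alpha = s}$, I claim the set of such ``splitting levels'' is cofinal in $\w_1$: otherwise past some point each level would admit a unique $s_\alpha \in \script{T}^*(\alpha)$ with $X_{s_\alpha}$ uncountable, and these $s_\alpha$'s would form an uncountable chain in the Aronszajn tree $\script{T}^*$, a contradiction. Choosing the tree order on $\script{F}$ so that its enumeration-index $\alpha$ is a splitting level and picking corresponding $s, t$, property~\ref{itemDIV} yields $C_s(\script{T}_\alpha) \cap C_t(\script{T}_\alpha) =^* \emptyset$ inside $\script{T}$.

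From this I will extract the divisibility partition $\script{T}^1 := C_s(\script{T}_\alpha)$, $\script{T}^2 := \script{T} \setminus \script{T}^1$, $X^1 := X_s$, $X^2 := X \setminus X_s \supseteq X_t$. For $x \in X_s$, property~\ref{itemAD} gives $N_G(x) \subset^* C_s$; each $y \in N_H(x)$ is witnessed by some $n_y \in V_y \cap N_G(x) \subset \N$, and whenever $n_y \in C_s$ (which fails only for finitely many $y$) we obtain $V_y \cap C_s \neq \emptyset$, i.e.\ $y \in \script{T}^1$. Hence $N_H(x) \subset^* \script{T}^1$, making $x$ of infinite degree in $(\script{T}^1,X^1)$. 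A symmetric argument, applied at $t$ in place of $s$ and using $C_t(\script{T}_\alpha) \subset^* \script{T}^2$, shows each $x \in X_t$ has infinite degree in $(\script{T}^2,X^2)$. So both $(\script{T}^i,X^i)$ contain $(\aleph_0,\aleph_1)$-subgraphs, witnessing the divisibility of $H$.

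The most delicate point I anticipate is the innocuous-looking reduction to $V_x = \{x\}$ for $x \in X$. Without it, $N_G(x)$ would have to be replaced by the combined $\N$-shadow of the branch set $V_x$, which could involve $B$-vertices from several branches of $\script{T}^*$ and thereby destroy the clean containment $N_G(x) \subset^* C_s$ needed above. I expect to handle this exactly as in the previous claim, by a pigeonhole/K\"onig-style argument that selects, in each $V_x$, a single $B$-representative witnessing infinitely many $H$-edges at $x$, and then passing to the corresponding sub-minor.
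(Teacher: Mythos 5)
Your proposal is correct and takes essentially the same route as the paper: reduce to the case where each $X$-side branch set is a single $B$-vertex and each $\script{T}$-side branch set meets $\N$, realise the family of $\N$-shadows as $T_\alpha$ for a suitably large $\alpha$, find two distinct nodes of $\script{T}^*(\alpha)$ each lying on uncountably many branches of $X$, and let $C_{s}(\script{T}_\alpha)$ and its complement induce the division via properties~\ref{itemAD} and~\ref{itemDIV}. The only structural difference is that you establish the existence of a splitting level directly rather than by invoking Lemma~\ref{incomparabletopsAr}, whose proof is the same chain argument in the Aronszajn tree. One point to tighten: two cofinal subsets of $\w_1$ need not meet, so ``the splitting levels are cofinal'' does not by itself let you choose $\alpha$ that is simultaneously a splitting level and an index with $T_\alpha=\script{F}$; you should note that splitting propagates upward (extend the two incomparable nodes to any higher level and apply pigeonhole on the countable level $\script{T}^*(\beta)$), so the splitting levels form an end-segment of $\w_1$ and any sufficiently large $\alpha$ realising $\script{F}$ works --- which is exactly how the paper arranges it through the lemma. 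Also, the clean way to get $V_x=\{x\}$ is not a pigeonhole inside $V_x$ but the observation that all but countably many $V_x$ must avoid the countable set $\N$ and hence, being connected subsets of the independent set $B$, are singletons.
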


Suppose that $H=(A,X)$ is an $(\aleph_0,\aleph_1)$-minor of $G$. As before, we may assume that $X \subset B$ and that the branch sets $X_a \subset V(G)$ for $a \in A$ intersect $\N$. Note that $\script{X}=\set{X_a \cap \N}:{a \in A}$ is the underlying set of uncountably many of our trees $T_\alpha$.

Now by Lemma \ref{incomparabletopsAr}, there are incomparable $s,t \in \script{T}^*$ each contained in uncountably many branches of $X$. Find $\alpha \geq \operatorname{ht}(s),\operatorname{ht}(t)$ such that $\script{X} = T_\alpha$, and find $s',t' \in \script{T}^*(\alpha)$ extending $s$ and $t$ respectively such that $\script{C}=\set{b \in X}:{s' \in b}$ and $\script{D}=\set{b \in X}:{t' \in b}$ are both uncountable.

But then \ref{itemDIV} implies that $C_{s'}(\script{T}_\alpha)$ and its complement witness that $H$ is divisible. Indeed, each $b \in \script{C}$ has co-finitely many of its neighbours in $C_{s'}(\script{T}_\alpha)$, since $N(b) \subset^* C_{s'}$ for all $b \in \script{C}$, and similarly, each $b \in \script{D}$ has co-finitely many of its neighbours in $C_{t'}(\script{T}_\alpha)$, as $N(b) \subset^* C_{t'}$ for all $b \in \script{D}$.
\end{proof}

Since every AD-family built in the above way satisfying \ref{itemAD} is anti-Luzin \cite[4.10]{Luzin}, we obtain the following corollary.

\begin{mycor}
Under CH, there is an uncountable anti-Luzin AD-family which contains no uncountable hidden weak tree families.
\end{mycor}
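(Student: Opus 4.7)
The plan is to verify the two claimed properties for the almost disjoint family $\script{A} = \set{N(b)}:{b \in B}$ produced in the proof of Theorem~\ref{thm_cool}; both should fall out immediately from what has already been arranged in that construction, together with the implication diagram of Section~\ref{sec2}.

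For the anti-Luzin property, the quickest route is to invoke \cite[4.10]{Luzin}, which guarantees that any AD-family obtained from a recursion ensuring property~\ref{itemAD} is automatically anti-Luzin. A self-contained argument is also easy to spell out: given an uncountable $\script{B} \subset \script{A}$, corresponding to an uncountable set of branches $B' \subset B$, apply Lemma~\ref{incomparabletopsAr} to find incompatible $s,t \in \script{T}^*$ each contained in uncountably many branches of $B'$. Pass to the common level $\alpha = \max\Set{\operatorname{ht}(s), \operatorname{ht}(t)}$ of $\script{T}^*$, which is countable, to obtain $s', t' \in \script{T}^*(\alpha)$ extending $s$ and $t$ respectively, each still witnessed by uncountably many branches of $B'$. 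Since $N(b) \subset^* C_{s'}$ whenever $s' \in b$, a pigeonhole on the (countably many possible) finite differences $N(b) \setminus C_{s'}$ isolates an uncountable $\script{C}$ on which this difference is a fixed finite set $F$, and similarly an uncountable $\script{D}$ for $t'$ with constant difference $F'$. Then $\bigcup \script{C} \cap \bigcup \script{D} \subset (C_{s'} \cap C_{t'}) \cup F \cup F'$ is finite by property~\ref{itemAD}, since $s'$ and $t'$ are incomparable.

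For the statement that no uncountable subfamily is a hidden weak tree family, the key observation is the implication chain gathered in the diagram of Section~\ref{sec2}: every hidden weak tree family contains a $T_2^{tops}$ subgraph. Hence if some uncountable $\script{A}' \subset \script{A}$ were a hidden weak tree family, the bipartite graph associated with $\script{A}'$ would contain a $T_2^{tops}$ subgraph, which, being a subgraph of the graph $G$ attached to $\script{A}$, would be a $T_2^{tops}$ subgraph, and hence a minor, of $G$. This contradicts the assertion of Theorem~\ref{thm_cool} that $G$ has no $(\aleph_0,\aleph_1)$-minor of binary type. I expect no serious obstacle here: the hard work has already been carried out in Theorem~\ref{thm_cool}, and the corollary amounts to reading off the two consequences through the dictionary of Section~\ref{sec2}.
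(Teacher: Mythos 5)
Your proposal is correct and takes essentially the same route as the paper: anti-Luzinness is read off from property~\ref{itemAD} via \cite[4.10]{Luzin}, and the absence of uncountable hidden weak tree subfamilies follows from the implication ``hidden weak tree family $\rightarrow$ containing a $T_2^{tops}$ subgraph'' together with Theorem~\ref{thm_cool}'s exclusion of binary-type minors, exactly as the paper intends. The self-contained anti-Luzin argument you add (Lemma~\ref{incomparabletopsAr}, pigeonholing to a countable level and then on the finite differences $N(b)\setminus C_{s'}$, and finishing with the almost-disjointness of $C_{s'}$ and $C_{t'}$ from \ref{itemAD}) is a correct unpacking of the cited result and is a harmless bonus.
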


This improves the corresponding result from \cite[4.6]{Luzin}, where it was proved under the additional assumption of the existence of a Suslin tree. 

\section{More on indivisible graphs}

In this final section, we investigate indivisible graphs in more detail. Our aim is to construct a counterexample to Question~\ref{Uindivisible} from the introduction. First however, we consider the question of when precisely indivisible graphs exist.

We recall two cardinal invariants in infinite combinatorics. The ultrafilter number $\mathfrak{u}$ is the least cardinal of a collection $\script{U}$ of infinite subsets of $\N$ that form a base of some non-principal ultrafilter on $\N$. In formulas,
\[\mathfrak{u} = \min \set{\cardinality{\script{U}}}:{\script{U} \subseteq [\N]^\w \text{ is a base for a non-principal ultrafilter on $\N$}}.\]
(Recall that $\script{U}$ is a base for an ultrafilter $\script{V}$ if $\script{U}\subseteq\script{V}$ and for all $V \in \script{V}$ there is $U \in \script{U}$ such that $U \subseteq V$.) We call $\script{R}\subset[\N]^\w$ a \emph{reaping family} if for all $A\in[\N]^\w$ there is $R\in\script{R}$ such that either $\cardinality{A \cap R}$ or $\cardinality{R \setminus A}$ is finite. 
The \emph{reaping number} $\mathfrak r$ is the least size of a reaping family.
In formulas,
\[\mathfrak r = \min \set{|\script{R}|}:{\script{R}\subseteq[\N]^\omega \text{ and } \forall A\in[\N]^\omega \exists R\in \script{R} ( A\cap R =^* \emptyset \lor R\setminus A =^* \emptyset}.\]

\begin{mythm}
The equality $\mathfrak{u}=\w_1$ implies that indivisible $(\aleph_0,\aleph_1)$-graphs exist, whereas $\mathfrak{r}>\w_1$ implies they do not exist.
\end{mythm}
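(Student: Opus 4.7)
The plan is to prove the two implications separately, treating them as completely independent arguments.

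For the direction \emph{$\mathfrak u = \w_1 \Rightarrow$ indivisible $(\aleph_0,\aleph_1)$-graphs exist}, I would fix a non-principal ultrafilter $\script U$ on $\N$ with a base $\set{U_\alpha}:{\alpha < \w_1}$ and build the desired graph $G$ with bipartition $(\N, \set{b_\alpha}:{\alpha<\w_1})$ by recursion on $\alpha$. At stage $\beta$, the collection $\set{U_\gamma}:{\gamma<\beta}$ is a countable subfamily of the ultrafilter $\script U$, hence has the finite intersection property, so the standard diagonal argument (pick a strictly increasing sequence of witnesses from $U_{\gamma_0} \cap \cdots \cap U_{\gamma_n}$ as the $n$ grows) yields an infinite pseudo-intersection which we designate as $N(b_\beta)$. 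Each $N(b_\beta)$ is then infinite and satisfies $N(b_\beta) \subset^* U_\gamma$ for every $\gamma < \beta$. To verify $\script U$-indivisibility, I would take an arbitrary $A \in \script U$, pick $\gamma$ in the base with $U_\gamma \subset A$, and observe that $N(b_\beta) \subset^* U_\gamma \subset A$ for every $\beta > \gamma$, so only the countably many $\beta \leq \gamma$ can be exceptions. The resulting $G$ is therefore a $\script U$-indivisible $(\aleph_0,\aleph_1)$-graph.

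For the direction \emph{$\mathfrak r > \w_1 \Rightarrow$ no indivisible $(\aleph_0,\aleph_1)$-graphs exist}, I would argue by contrapositive. Suppose $G = (\N, B)$ is a $\script U$-indivisible $(\aleph_0,\aleph_1)$-graph for some non-principal ultrafilter $\script U$, and consider the family $\script R = \set{N(b)}:{b \in B}$, which has size $\aleph_1$. For an arbitrary infinite $A \subset \N$, the ultrafilter contains either $A$ or $\N \setminus A$; by $\script U$-indivisibility, all but countably many $N(b)$ are almost contained in whichever of the two lies in $\script U$. In the first case there is $b$ with $N(b) \subset^* A$ (so $N(b) \setminus A$ is finite), and in the second there is $b$ with $N(b) \subset^* \N \setminus A$ (so $N(b) \cap A$ is finite). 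Either way $\script R$ reaps $A$, making $\script R$ a reaping family of size $\aleph_1$ and contradicting $\mathfrak r > \w_1$.

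The only delicate point of the whole argument is the recursive construction in the first direction, and the key observation there is that one does \emph{not} need $N(b_\beta)$ to lie in $\script U$ itself (which would effectively demand a tower base of $\script U$ and is not available from $\mathfrak u = \w_1$ alone); it suffices that $N(b_\beta)$ be an infinite pseudo-intersection of the countably many previously chosen base elements. The second direction is in essence just unpacking the definitions of indivisibility and of a reaping family.
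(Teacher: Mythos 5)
Your proposal is correct and follows essentially the same route as the paper: take pseudo-intersections along an $\w_1$-base of a non-principal ultrafilter for the first direction, and observe that the neighbourhoods of an indivisible graph form a reaping family of size $\aleph_1$ for the second. You merely spell out the verifications that the paper compresses into ``it is easy to check,'' and your remark that $N(b_\beta)$ need not itself belong to $\script{U}$ is exactly the point that makes the construction work from $\mathfrak{u}=\w_1$ alone.
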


\begin{proof}
Let $\script{V}$ be a non-principal ultrafilter and let $\set{U_\alpha}:{\alpha < \w_1}$ be a base for $\script{V}$. We will build an indivisible $(\aleph_0,\aleph_1)$-graph with bipartition $(\N,B)$ as follows. Let $B = \set{b_\alpha}:{\alpha<\w_1}$. For every $b_\alpha$ we let $N(b_\alpha)$ be an infinite pseudo-intersection of the family $(U_\beta)_{\beta < \alpha}$. It is easy to check that this yields a graph as desired.

Conversely, if $(\N,B)$ is indivisible, then for every $A \subset \N$, all but countably many elements of $\{N(b):b\in B$\} are almost contained in $A$ or almost disjoint from $A$. It follows that $\{N(b):b\in B\}$ is a reaping family and therefore $\mathfrak{r}=\w_1$.
\end{proof}

In particular, it is well-known (see \cite{vaughn}) that we have
$\w_1 \leq \mathfrak{r} = \pi\mathfrak{u} \leq \mathfrak{u} \leq \cont$, 
where $\pi\mathfrak{u}$ is the least cardinal of a local $\pi$-base of some non-principal ultrafilter on $\N$. Since it is consistent that  $w_1=\mathfrak{u}<\cont$, it follows that CH is independent of the existence of indivisible $(\aleph_0,\aleph_1)$-graphs. However, we do not know whether indivisible graphs exist in the Bell-Kunen model where $\w_1=\pi\mathfrak{u}< \mathfrak{u}$, \cite{bellkunen}.

Lastly, we observe the following connection between indivisible graphs and $\pi$-bases: The neighbourhoods $N(b_\alpha)$ of an $\script{U}$-indivisible $(\aleph_0,\aleph_1)$-graph form a $\pi$-base for $\script{U}$. And conversely, if a family $\set{N_\alpha}:{\alpha < \w_1}$  of infinite subsets of $\N$ forms a $\pi$-base for a \emph{unique} ultrafilter $\script{U}$, then the corresponding $(\aleph_0,\aleph_1)$-graph is indivisible.

We are now ready to answer Question~\ref{Uindivisible} in the negative.

\begin{mythm}
\label{nathansthm}
Assume $\textnormal{CH}$. Let $\script{U}$ be a non-principal ultrafilter on the natural numbers. For every $\script{U}$-indivisible $(\aleph_0,\aleph_1)$-graph $G$ there exists an $\script{U}$-indivisible $(\aleph_0,\aleph_1)$-graph $H$ such that $G \not\preceq H$.
\end{mythm}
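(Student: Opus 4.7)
The plan is to construct $H$ by a transfinite recursion of length $\omega_1$ under CH, building neighborhoods $N_H(c_\alpha)$ that diagonalize against every potential minor embedding of $G$ into $H$. The structural analysis is the key first step: given branch sets $\{V_v : v \in V(G)\}$ of a minor embedding $G \preceq H$, the fact that $H$ is bipartite with $|\mathbb{N}_H| = \aleph_0$ and branch sets are pairwise disjoint forces all but countably many branch sets to be singletons in $B_H$. In particular, there is a cocountable $B'_G \subseteq B_G$ with $V_b = \{\phi(b)\}$ for $b \in B'_G$ and $\phi \colon B'_G \to B_H$ injective, while the remaining countable data (chiefly $\{V_a : a \in \mathbb{N}_G\}$) determines a function $\pi\colon \mathbb{N}_H \to \mathbb{N}_G \cup \{*\}$ via $\pi^{-1}(a) = V_a \cap \mathbb{N}_H$. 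The edge condition of the embedding then reduces to $\pi(N_H(\phi(b))) \supseteq N_G(b)$ for every $b \in B'_G$. Hence $G \not\preceq H$ follows as soon as, for every $\pi \colon \mathbb{N} \to \mathbb{N} \cup \{*\}$, the set
$$C_\pi := \{c \in B_H : \pi(N_H(c)) \supseteq N_G(b) \text{ for some } b \in B_G\}$$
is countable, since otherwise $\phi$ would be an injection from a cocountable set into a countable one.

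Under CH there are only $\mathfrak{c} = \aleph_1$ such functions $\pi$, so enumerate them as $\{\pi_\alpha : \alpha < \omega_1\}$ and enumerate $\mathcal{U} = \{U_\alpha : \alpha < \omega_1\}$. Build $B_H = \{c_\alpha : \alpha < \omega_1\}$ recursively, choosing $N_H(c_\alpha)$ at stage $\alpha$ to satisfy (i) $N_H(c_\alpha) \subseteq^* U_\beta$ for every $\beta < \alpha$, which yields $\mathcal{U}$-indivisibility (since for each $U_\gamma \in \mathcal{U}$ only the $c_\alpha$ with $\alpha \leq \gamma$ can fail $N_H(c_\alpha) \subseteq^* U_\gamma$), and (ii) $c_\alpha \notin C_{\pi_\beta}$ for every $\beta < \alpha$, so $C_{\pi_\beta} \subseteq \{c_\gamma : \gamma \leq \beta\}$ is countable. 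At stage $\alpha$, start with an infinite pseudo-intersection $Y_\alpha$ of $\{U_\beta : \beta < \alpha\}$ and then further trim it to ensure (ii). For each $\beta < \alpha$, consider the pushforward $\mathcal{V}_\beta = (\pi_\beta)_*\mathcal{U}$ on $\mathbb{N}_G \cup \{*\}$: if $\mathcal{V}_\beta$ is principal at some $a_0$ (i.e.\ $\pi_\beta^{-1}(a_0) \in \mathcal{U}$), then (i) already forces $\pi_\beta(N_H(c_\alpha))$ to be essentially $\{a_0\}$ and so it cannot contain any infinite $N_G(b)$; otherwise $\mathcal{V}_\beta$ is a non-principal ultrafilter on $\mathbb{N}_G$, and we excise from $N_H(c_\alpha)$ the $\pi_\beta$-preimages of a carefully chosen set of $a$'s so that $\pi_\beta(N_H(c_\alpha))$ misses at least one element of every $N_G(b)$.

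The principal technical obstacle is the subcase where $\mathcal{V}_\beta = \mathcal{U}$ itself (as for $\pi_\beta = \mathrm{id}$), so that $\{N_G(b)\}$ is already a $\pi$-base for $\mathcal{V}_\beta$ and, naively, every $\mathcal{V}_\beta$-set almost-contains some $N_G(b)$, apparently preventing wholesale fibre exclusion without violating (i). The resolution exploits the distinction between $\subseteq$ and $\subseteq^*$: $\mathcal{U}$-indivisibility only demands $\subseteq^*$, so one may always puncture a finite subset from the current pseudo-intersection without disturbing (i), and the finite punctures at stage $\alpha$ need only break the finitely many containments $N_G(b) \subseteq N_H(c_\alpha)$ that are potentially problematic relative to the countably many $\pi_\beta$ active at that stage. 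Combined with a global bookkeeping that distributes the "cost" of each $\pi_\beta$ across only countably many $c_\alpha$, this lets us satisfy all countably many constraints at every stage; assembling the recursion then yields the desired $\mathcal{U}$-indivisible $H$ with $G \not\preceq H$.
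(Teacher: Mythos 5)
Your structural reduction is the same as the paper's: all but countably many branch sets of a putative minor embedding are singletons on the $B_H$ side, the countable part of the data is captured by a function $\pi\colon \N_H \to \N_G \cup \{*\}$ (the paper's sequence $\mathcal{X}_\delta$ of disjoint fibres), and the edge condition forces $\pi(N_H(\phi(b))) \supseteq N_G(b)$ for cocountably many $b$. From there, however, you aim at a strictly stronger target than the paper does, and the proof of that target is where the gap sits. Requiring $c_\alpha \notin C_{\pi_\beta}$ for all $\beta<\alpha$ means that at stage $\alpha$ you must defeat \emph{all uncountably many} $b \in B_G$ simultaneously, for each of the countably many active $\pi_\beta$. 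In the critical case $(\pi_\beta)_*\mathcal{U}=\mathcal{U}$ your resolution rests on the assertion that only ``finitely many containments $N_G(b)\subseteq N_H(c_\alpha)$ are potentially problematic''; this is not justified and is not true as stated: nothing you have arranged bounds the number of $b$ with $N_G(b)\subseteq N_H(c_\alpha)$. The natural repair --- choose $N_H(c_\alpha)\notin\mathcal{U}$, so that indivisibility makes $N_G(b)\cap N_H(c_\alpha)$ finite for cocountably many $b$ --- still leaves \emph{countably} many exceptional $b$ per $\pi_\beta$, hence countably many punctures whose union can exhaust $N_H(c_\alpha)$ unless they are interleaved with the points one keeps; and the analogous requirement $\pi_\beta(N_H(c_\alpha))\notin\mathcal{U}$ for the various $\beta$ with $(\pi_\beta)_*\mathcal{U}=\mathcal{U}$ may conflict across different $\beta$, since the relevant preimage sets are not in $\mathcal{U}$ and need not admit a common infinite pseudo-intersection with the $U_\gamma$. ``Global bookkeeping that distributes the cost'' does not address this: the cost is not spread over the $c_\alpha$, it is concentrated at each single $c_\alpha$, which must dodge uncountably many $b$ at once. (A smaller, fixable slip: in your principal case you claim that (i) forces $N_H(c_\alpha)\subseteq^* \pi_\beta^{-1}(a_0)$, but $\pi_\beta^{-1}(a_0)\in\mathcal{U}$ need not occur among $\{U_\gamma : \gamma<\alpha\}$; one must add it to the sets being pseudo-intersected.)

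The paper sidesteps all of this by demanding much less of the construction. Its condition (2) only guarantees, for each pair $\gamma,\delta\leq\alpha$, one index $n\in N(b_\gamma)$ with $N(c_\alpha)\cap X^\delta_n=\emptyset$ --- a countable task per stage, achieved by interleaving the choice of excluded fibres $X^{\gamma_k}_{n_k}\notin\mathcal{U}$ with the choice of the points $m_k$. This deliberately leaves open that some $c_\beta$ covers $N_G(b_\alpha)$ via $\pi$ for $\alpha>\beta$, so the sets you call $C_\pi$ may well be uncountable there; the verification instead shows that the induced assignment $\alpha\mapsto\beta(\alpha)$ is regressive on a co-countable set and invokes Fodor's pressing-down lemma to contradict its injectivity. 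Your write-up contains no substitute for this final step. Either supply Fodor (after weakening your stage-$\alpha$ requirement to the countably many $b_\gamma$ with $\gamma\leq\alpha$, which is what the interleaving argument actually delivers), or give an honest proof that each $C_\pi$ can be kept countable; as it stands the argument does not close.
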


\begin{proof}
Using CH, let $\set{U_\alpha}:{\alpha < \w_1}$ be an enumeration of the elements of $\script{U}$, and let $\set{\script{X}_\alpha}:{\alpha < \w_1}$ be an enumeration of all infinite sequences of non-empty disjoint subsets of $\N$. For $\alpha < \w_1$ write $\script{X}_\alpha = \Sequence{X^\alpha_n}:{n \in \N} \in \script{P}\p{\N}^\N$.

Suppose $G$ is a $\script{U}$-indivisible $(\aleph_0,\aleph_1)$-graph with bipartition $(\N,B)$. We write $B=\set{b_\alpha}:{\alpha < \w_1}$. Our graph $H$ will be an $(\aleph_0,\aleph_1)$-graph with bipartition $(\N,C)$ where $C = \set{c_\alpha}:{\alpha < \w_1}$. Our task is to define suitable neighbourhoods $N(c_\alpha)$ for all $\alpha < \w_1$. We will do this as follows. At step $\alpha < \w_1$, choose a neighbourhood $N(c_\alpha) \subset \N$ such that
\begin{enumerate}
\item $N(c_\alpha) \subseteq^*  U_\beta$ for all $\beta \leq \alpha$, and
\item for any $\gamma,\delta \leq \alpha$ there is $n \in N(b_\gamma)$ such that $N(c_\alpha)  \cap X^\delta_n = \emptyset$.
\end{enumerate}
To build the neighbourhood $N(c_\alpha) = \set{m_k}:{k \in \N} $ recursively, enumerate the set $\set{U_\beta}:{\beta \leq \alpha}$ as $\set{U^n}:{n \in \N}$ and $\set{(\beta,\gamma)}:{\beta,\gamma \leq \alpha}$ as $\set{(\beta_n,\gamma_n)}:{n \in \N}$.

To choose $m_k$, note that since the collection $\set{X^{\gamma_k}_n}:{n \in N(b_{\beta_k})}$ is infinite and disjoint, there is an index $n_k \in N(b_{\beta_k})$ such that $X^{\gamma_k}_{n_k} \notin \script{U}$ and $X^{\gamma_k}_{n_k} \cap \set{m_l}:{l < k} = \emptyset$. 
Now pick 
\[m_k \in \bigcap_{l \leq k} \p{U^l \setminus X^{\gamma_l}_{n_l}} \in \script{U}.\]
This choice of $N(c_\alpha) = \set{m_k}:{k \in \N} $ clearly satisfies $(1)$. To see that it satisfies $(2)$, note that $X^{\gamma_k}_{n_k} \cap \set{m_l}:{l < k}=\emptyset$ by our choice of $n_k$, and $X^{\gamma_k}_{n_k} \cap \set{m_l}:{l \geq k}=\emptyset$ by our choice of the $m_l$ for $l \geq k$. This completes the recursive construction of the graph $H$.

\begin{samepage}
\begin{myclaim*} $H$ is $\script{U}$-indivisible.
\end{myclaim*}
This is immediate from $(1)$.
\end{samepage}

\begin{myclaim*} $G$ is not a minor of $H$.
\end{myclaim*}
 Suppose for contradiction that it is. Without loss of generality, we may assume that every vertex on the $\N$-side of $G$ has uncountable degree. Write $V_n,W_\alpha \subset V(H)$ ($n \in \N$, $\alpha < \w_1$) for the branch sets of the vertices in $\N$ and $B$ respectively. By our assumption on the degrees of the vertices on the $\N$-side of $G$, it follows that $V_n \cap \N \neq \emptyset$ for all $n \in \N$. Thus, $\Sequence{V_n \cap \N}:{n \in \N} = \script{X}_\gamma$ for some $\gamma < \w_1$.

Also, since only countably many branch sets can intersect $\N$, there is some $\delta < \w_1$ such that $W_\alpha = \singleton{c_{\beta(\alpha)}}$ for all $\alpha > \delta$. Also, since branch sets must be disjoint, the function $\beta \colon \alpha \mapsto \beta(\alpha)$ is injective.

Let $\eta = \max \Set{\gamma,\delta}$. We claim that for all $\alpha > \eta$, we have $\beta (\alpha) < \alpha$. Indeed, $W_\alpha$ needs to have an edge to all $V_n$ for $n \in N(b_\alpha)$, which requires that $c_{\beta(\alpha)}$ has an edge to  $X^\gamma_n$ for all $n \in N(b_\alpha)$. However, if $\alpha \leq \beta(\alpha)$, then this is impossible, as $(2)$ implies that $N(c_{\beta(\alpha)}) \cap X^\gamma_{n_0} = \emptyset$ for at least one $n_0 \in N(b_\alpha)$.

Thus, we have $\beta (\alpha) < \alpha$ for all $\alpha > \eta$. By Fodor's Lemma \cite[III.6.14]{Kunen}, however, this implies that the map $\beta$ is constant on an uncountable subset of $\w_1$, contradicting its injectivity.
%
\end{proof}

\begin{myquest}
Assume $\textnormal{CH}$. Is it true that for every $\script{U}$-indivisible $(\aleph_0,\aleph_1)$-graph $G$ there exists a $\script{U}$-indivisible $(\aleph_0,\aleph_1)$-graph $H$ such that both $G \not\preceq H$ and $H \not\preceq G$?
\end{myquest}

\subsection*{Acknowledgements}
We would like to thank Lajos Soukup for helpful conversations regarding the proof of Theorem~\ref{thm_cool}, and Alan Dow and Michael Hru\v{s}\'ak regarding the proof of Theorem~\ref{intromain}. 

The second and third author acknowledge support from DAAD project 57156702, ``Universal profinite graphs".

\end{document}